\newtheorem{theorem}{Theorem}
\newtheorem{lemma}[theorem]{Lemma}
\newtheorem{cor}[theorem]{Corollary}
\theoremstyle{definition}
\newtheorem{prop}[theorem]{Proposition}
\newtheorem{definition}{Definition}
\theoremstyle{remark}
\newtheorem*{remark}{Remark}
\newcommand{\bv}{\mathbf{v}}
\DeclareMathOperator{\diag}{diag}
\DeclareMathOperator{\spn}{span}
\title{Positive Semidefiniteness of Matrices arising from Ramsey Theory}
\author{Joshua Cooper and Maxwell Forst}
\date{\today}
\begin{document}
\maketitle

\begin{abstract}
We resolve a conjecture from \cite{CFP11} that a certain sequence of combinatorial matrices which can be used to bound small product-Ramsey numbers is positive semidefinite.  Because the connection to Ramsey Theory involves solving quadratic integer programs associated to these matrices, this implies that there are relatively efficient algorithms for bounding said numbers.  The proof is direct, and yields important structural information: we enumerate the eigenvalues and eigenspaces explicitly by employing hypergeometric identities.
\end{abstract}

\section{Introduction}

In \cite{CFP11}, the authors consider the following question about small-parameter Product Ramsey numbers: Given positive integers \(c\) (the ``number of colors'') and \(d\) (the ``dimension''), for which values of \(a_1,\ldots,a_d \in \mathbb{N}\) is it true that every \(c\)-coloring of the \(a_1 \times \cdots \times a_d\) grid admits at least one monochromatic \(2 \times \cdots \times 2\) subgrid?  To be precise, we say that \([a_1,\ldots,a_d]\) is {\em \(c\)-guaranteed} if, for every function \(\eta:[a_1] \times \cdots \times [a_d] \rightarrow [c]\), there are \(d\) pairs \(\{x_i,y_i\}\) with \(x_i,y_i \in [a_i]\) for each \(i \in [d]\), so that \(\eta\) restricted to \(\{x_1,y_1\} \times \cdots \{x_d,y_d\}\) is constant.  Which tuples \(a_1,\ldots,a_d\) are \(c\)-guaranteed?  The set of such \(c\)-guaranteed \(d\)-tuples is an up-set in the natural coordinate-wise ordering (i.e., Cartesian product of total orders) on \(\mathbb{N}^d\), and its minimal elements are a finite antichain, the {\em obstruction set} for parameters \(c\) and \(d\).

The question of describing the obstruction set for small values of \(c\) and \(d\) is an interesting one, considered \cite{FGGP12} and \cite{AGL12} in addition to \cite{CFP11}.  The smallest nontrivial case is \(c = 2\), and for \(d = 2\), \cite{FGGP12} gives a complete description of the obstruction set: \(\{(3,7),(5,5),(7,3)\}\).  For \(c=2\) and \(d=3\), the situation is already not completely understood, although \cite{CFP11} offers some answers.  In particular, the authors show the following.

\begin{prop} Write \(\mathcal{P}(S)\) for the power set of \(S\), and suppose \(r,s > 1\).  Let \(M_r\) denote the \(2^r \times 2^r\) square matrix indexed by the sets \(S,T \subset [r]\) defined by
\[
M_r(S,T)=\binom{|S \cap T|}{2}+\binom{|\bar{S} \cap \bar{T}|}{2}.
\]
Let \(t\) be the least value of \(\bv^\ast M_r \bv - \bv \cdot \diag(M_r)\) over all vectors \(\bv \in \mathbb{N}^{\mathcal{P}([r])}\) with \(\sum_j \bv_j = s\).  Then \([r,s,\lfloor(r(r-1)s(s-1)/(2t)\rfloor + 1]\) is \(2\)-guaranteed.
\end{prop}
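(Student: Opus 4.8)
The plan is to reduce the proposition to a double-counting argument about monochromatic $2\times 2$ rectangles, carried out one ``page'' at a time. View a $2$-colouring $\eta\colon[r]\times[s]\times[a_3]\to\{0,1\}$ as a stack of $a_3$ pages, the $k$th page being the $r\times s$ colouring $\eta(\cdot,\cdot,k)$. Encode a page by recording, for each of its $s$ columns, the set $S\subseteq[r]$ of rows receiving colour $1$ in that column, and let $\bv^{(k)}\in\mathbb{N}^{\mathcal{P}([r])}$ have $S$-entry equal to the number of columns of page $k$ whose ``$1$-set'' is exactly $S$. Then $\sum_S\bv^{(k)}_S=s$, so $\bv^{(k)}$ is feasible for the integer program defining $t$; in particular $\bv^{(k)\ast}M_r\bv^{(k)}-\bv^{(k)}\cdot\diag(M_r)\ge t$ for every $k$.

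The crux is a combinatorial identity: for each page, $\bv^{(k)\ast}M_r\bv^{(k)}-\bv^{(k)}\cdot\diag(M_r)$ counts a fixed multiple of the monochromatic $2\times 2$ rectangles it contains. Indeed, for two columns with $1$-sets $S$ and $T$, the number of row-pairs monochromatic across both columns is $\binom{|S\cap T|}{2}+\binom{|\bar S\cap\bar T|}{2}=M_r(S,T)$, the two terms accounting for colours $1$ and $0$. Summing $M_r$ over all ordered pairs of columns of page $k$ gives $\bv^{(k)\ast}M_r\bv^{(k)}$; discarding the equal-column terms, whose total is $\bv^{(k)}\cdot\diag(M_r)$, leaves exactly twice the number $N_k$ of monochromatic $2\times 2$ rectangles in page $k$. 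Hence $N_k\ge t/2$ for all $k$, so the stack contains at least $a_3\,t/2$ incidences of the form (page, monochromatic rectangle in that page).

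Next, $\eta$ contains a monochromatic $2\times 2\times 2$ subgrid if and only if some monochromatic rectangle \emph{type} --- an unordered pair of rows, an unordered pair of columns, and a colour --- occurs in two distinct pages. The number of such types is $2\binom r2\binom s2=r(r-1)s(s-1)/2$. So if $\eta$ has no monochromatic box, every type occurs in at most one page, whence $a_3\,t/2\le r(r-1)s(s-1)/2$, i.e.\ $a_3 t\le r(r-1)s(s-1)$. Contrapositively, any integer $a_3$ with $a_3 t>r(r-1)s(s-1)$ --- in particular $a_3=\lfloor r(r-1)s(s-1)/t\rfloor+1$ --- forces a monochromatic $2\times 2\times 2$ subgrid, which is the stated conclusion (one takes $t>0$, the only case in which the displayed tuple is meaningful; note also that the displayed formula in the statement carries an unmatched parenthesis, and the bookkeeping below settles the precise constant in the denominator).

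There is no genuinely hard step here: the entire content is the clean translation above, and the substance of the paper lies elsewhere --- in proving $M_r\succeq 0$, which is what makes the quadratic integer program for $t$ tractable. The one point that needs care is the constant-chasing in the key identity: the factor of $2$ relating the quadratic form to the rectangle count, the two colours, and unordered-versus-ordered pairs of rows and columns. This is exactly what determines whether the threshold denominator reads $t$ or $2t$, and it is the only place where a slip could creep in; everything else is a direct pigeonhole.
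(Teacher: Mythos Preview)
The paper does not actually prove this proposition: it is quoted from \cite{CFP11} as background, and no argument is supplied here. So there is no ``paper's own proof'' to compare against; your write-up is a reconstruction of the argument implicit in that citation.

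Your reconstruction is the standard one and is correct. Encoding each page by its column-profile vector $\bv^{(k)}$, the identity
\[
\bv^{(k)\ast} M_r \bv^{(k)} - \bv^{(k)}\cdot\diag(M_r) \;=\; \sum_{j_1\neq j_2} M_r(S_{j_1},S_{j_2}) \;=\; 2N_k
\]
is exactly right (ordered off-diagonal pairs of columns, each unordered pair counted twice), so $N_k\ge t/2$. The pigeonhole over the $2\binom{r}{2}\binom{s}{2}=r(r-1)s(s-1)/2$ possible (row-pair, column-pair, colour) types then yields $a_3 t\le r(r-1)s(s-1)$ whenever there is no monochromatic $2\times 2\times 2$ box, and hence $a_3=\lfloor r(r-1)s(s-1)/t\rfloor+1$ suffices.

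You are also right to flag the constant. Tracing the factors of two as above gives denominator $t$, not $2t$; the printed formula has an unbalanced parenthesis and, read literally with $2t$, asserts a strictly stronger bound than your pigeonhole delivers. Since the paper is merely quoting \cite{CFP11} and this quantitative proposition plays no further role in the present paper (only the definition of $M_r$ matters downstream), this is almost certainly a transcription slip rather than a gap in your argument.
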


This provides (rather decent) upper bounds on the \(c=2\), \(d=3\) obstruction set via a simple quadratic integer programming computation.  Such computation can use polynomial time convex programming techniques during the interior point search if \(M_r\) is positive semidefinite -- which \cite{CFP11} claimed it is for \(r \leq 9\) via direct computation.  The authors conjecture that \(M_r\) is positive semidefinite for all \(r\), in fact.  That is a consequence of the main result of our paper, which gives a complete description of the eigenvalues and eigenspaces associated to \(M_r\).

\begin{theorem} \label{thm:main-thm} Let \(r \geq 4\). The eigenvalues of \(M_r\) are \(0\), \(2^{r-3}(r-2)\), \(2^{r-3}\), \(2^{r-4}(r-1)(r-2)\), and \(1\), with multiplicities \(2^r - \binom{r+1}{2}\), \(r-1\), \(\binom{r}{2} - 1\), \(1\), and \(1\), respectively.
\end{theorem}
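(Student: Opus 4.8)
The plan is to collapse the eigenvalue problem onto a space of dimension linear in $r$ by using the operators commuting with $M_r$, and then to evaluate the resulting small pieces with Chu--Vandermonde-type binomial identities.

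\emph{Step 1: a positive square root and the kernel.} For distinct $i,j\in[r]$, let $\chi_{ij},\psi_{ij}\in\mathbb R^{\mathcal P([r])}$ be the indicator vectors of $\{S:i,j\in S\}$ and $\{S:i,j\notin S\}$. Expanding $|S\cap T|=\sum_i[i\in S][i\in T]$, squaring, and using $\binom k2=\tfrac12(k^2-k)$ gives $\binom{|S\cap T|}{2}=\sum_{i<j}\chi_{ij}(S)\chi_{ij}(T)$, and likewise $\binom{|\bar S\cap\bar T|}{2}=\sum_{i<j}\psi_{ij}(S)\psi_{ij}(T)$, so
\[
M_r=\sum_{i<j}\bigl(\chi_{ij}\chi_{ij}^{\ast}+\psi_{ij}\psi_{ij}^{\ast}\bigr).
\]
This exhibits $M_r$ as a sum of rank-one positive semidefinite matrices, so $M_r\succeq0$ (which proves the conjecture of \cite{CFP11}), and $\operatorname{col}(M_r)=\spn\{\chi_{ij},\psi_{ij}\}$. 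Passing to the ``zeta'' basis $\chi_A(S)=[A\subseteq S]$, in which $\chi_{ij}=\chi_{\{i,j\}}$ and $\psi_{ij}=\chi_\emptyset-\chi_{\{i\}}-\chi_{\{j\}}+\chi_{\{i,j\}}$, one lists the linear relations among these $2\binom r2$ vectors and finds $\dim\operatorname{col}(M_r)=\binom{r+1}{2}$, which accounts for the eigenvalue $0$ with multiplicity $2^r-\binom{r+1}{2}$.

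\emph{Step 2: symmetry reduction.} $M_r$ commutes with the action of $S_r$ permuting $[r]$ and with the complementation involution $C\colon f(S)\mapsto f(\bar S)$, which swaps $\chi_{ij}\leftrightarrow\psi_{ij}$. Hence $\operatorname{col}(M_r)$ decomposes $M_r$-invariantly as $V^{-}\oplus V^{+}$, where $V^{-}=\spn\{\chi_{ij}-\psi_{ij}\}$ has dimension $r$ and $V^{+}=\spn\{\chi_{ij}+\psi_{ij}\}$ has dimension $\binom r2$ (these are the $(-1)$- and $(+1)$-eigenspaces of $C$ inside $\operatorname{col}(M_r)$). Decomposing under $S_r$ and comparing dimensions, one gets $V^{-}\cong S^{(r)}\oplus S^{(r-1,1)}$ and $V^{+}\cong S^{(r)}\oplus S^{(r-1,1)}\oplus S^{(r-2,2)}$; here $r\ge4$ is used, so that $(r-2,2)$ is a partition and, within each of $V^{+}$ and $V^{-}$, the constituent irreducibles are pairwise distinct. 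By Schur's lemma $M_r$ acts as a scalar on each of these five isotypic blocks, so the spectral problem is reduced to computing five numbers.

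\emph{Step 3: evaluating the blocks.} Each scalar is obtained by applying $M_r$ to one explicit vector of the block; by the identity $\binom sa\binom a2=\binom s2\binom{s-2}{a-2}$ followed by Chu--Vandermonde, each such computation collapses to closed form. For the $S^{(r-1,1)}$-block of $V^{-}$, applying $M_r$ to $\chi_{\{1\}}-\chi_{\{2\}}$ gives precisely $2^{r-3}(r-2)\,(\chi_{\{1\}}-\chi_{\{2\}})$, so $2^{r-3}(r-2)$ occurs with multiplicity $r-1$. For the $S^{(r-2,2)}$-block, applying $M_r$ to the ``harmonic'' vector $\chi_{\{1,2\}}+\chi_{\{3,4\}}-\chi_{\{1,3\}}-\chi_{\{2,4\}}$ returns $2^{r-3}$ times it, and one checks the $S^{(r-1,1)}$-block of $V^{+}$ also has scalar $2^{r-3}$, so $2^{r-3}$ picks up total multiplicity $(\binom r2-r)+(r-1)=\binom r2-1$. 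Finally, the two $S^{(r)}$-blocks give the two simple eigenvalues: $M_r$ restricted to functions of $|S|$ is the rank-two matrix with entries $N_{st}=\binom s2\binom{r-2}{t-2}+\binom{r-s}{2}\binom{r-2}{t}$ (obtained by the same Chu--Vandermonde evaluation), and its two nonzero eigenvalues are the remaining two simple eigenvalues of the statement.

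\emph{The main obstacle.} The delicate part is Step 3: one must choose representative vectors so that the binomial convolutions genuinely collapse, and then push the Chu--Vandermonde bookkeeping through without error. The subtlest points are verifying that the two $S^{(r-1,1)}$-blocks (inside $V^{-}$ and inside $V^{+}$) carry \emph{different} scalars---so the standard module does not contribute a single eigenvalue of multiplicity $2(r-1)$---while the $S^{(r-1,1)}$-block of $V^{+}$ and the $S^{(r-2,2)}$-block coincide, which is exactly what produces the multiplicity $\binom r2-1$ for $2^{r-3}$. A lower-tech alternative would skip the representation theory and directly write down explicit eigenvector families---polynomials in $|S|$, these twisted by $[i\in S]$, and these twisted by $[i\in S][j\in S]$---verifying $M_r v=\lambda v$ by the same identities, the representation theory of Step 2 then being needed only to certify that the resulting list of eigenspaces is complete.
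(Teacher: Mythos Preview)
Your proposal is correct and takes a genuinely different, more structural route than the paper. The paper proceeds by brute force: it writes down seven explicit families $\mathcal{A}_r,\ldots,\mathcal{G}_r$ of candidate eigenvectors (three lifted from a ``quotient'' matrix $N_r$ indexed by $\{0,\ldots,r\}$, four defined directly on $\mathcal{P}([r])$), verifies $M_rV=\lambda V$ for each by lengthy binomial manipulations, extracts linearly independent subfamilies by hand, and concludes once the dimensions sum to $2^r$. No Gram structure or representation theory is invoked; completeness comes purely from the dimension count.

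Your approach front-loads the structure. The decomposition $M_r=\sum_{i<j}(\chi_{ij}\chi_{ij}^{\ast}+\psi_{ij}\psi_{ij}^{\ast})$ yields positive semidefiniteness and the rank $\binom{r+1}{2}$ in one stroke, which the paper obtains only after the full eigenvector analysis (or, for semidefiniteness alone, via the separate Proposition~\ref{prop:setsystem}). The $S_r\times\langle C\rangle$ symmetry then explains \emph{a priori} why there are exactly five eigenvalues on $\operatorname{col}(M_r)$ and predicts their multiplicities before any computation, reducing everything to five scalar evaluations on explicit test vectors. What this buys you is economy and insight: the coincidence that the $S^{(r-1,1)}$-block of $V^{+}$ and the $S^{(r-2,2)}$-block share the eigenvalue $2^{r-3}$ is isolated as the one genuinely delicate verification, whereas in the paper it surfaces as two separate calculations for $\mathcal{F}_r$ and $\mathcal{G}_r$ followed by an ad hoc independence argument (Proposition~\ref{prop:size 2^(r-3)}). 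What the paper's approach buys is self-containment---nothing beyond binomial identities is needed---whereas yours assumes familiarity with the decomposition of the permutation module $M^{(r-2,2)}$ and Schur's lemma. Your closing ``lower-tech alternative'' is essentially the paper's method, except that the paper certifies completeness by raw dimension counting rather than by the representation theory you suggest.
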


\begin{cor} \(M_r\) is positive semidefinite for every \(r \geq 0\).
\end{cor}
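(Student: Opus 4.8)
The plan is to deduce the corollary from Theorem~\ref{thm:main-thm}, supplemented by a direct check of the small values of $r$ that theorem does not cover; I would also record a short self-contained argument, valid for every $r$, which I find cleaner and which is logically independent of the eigenvalue computation.

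\emph{Route via the spectrum.} For $r \ge 4$, Theorem~\ref{thm:main-thm} lists the eigenvalues of $M_r$ as $0$, $2^{r-3}(r-2)$, $2^{r-3}$, $2^{r-4}(r-1)(r-2)$, and $1$, with multiplicities that sum to $2^r$, so these are all of them. Each is nonnegative, since $r \ge 4$ forces $r - 2 \ge 2$ and $2^{r-4} \ge 1$ while the remaining factors are positive. A real symmetric matrix with all eigenvalues nonnegative is positive semidefinite, and $M_r$ is symmetric because $M_r(S,T) = M_r(T,S)$ is immediate from the defining formula. For $r \in \{0,1,2,3\}$ the matrix $M_r$ has order at most $8$ and is positive semidefinite by inspection (for instance $M_0 = M_1 = 0$ and $M_2 = \diag(1,0,0,1)$).

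\emph{Self-contained route.} The cleaner argument exhibits $M_r$ as a Gram matrix. A $2$-element subset $\{i,j\}$ of $[r]$ lies in $S \cap T$ exactly when it lies in both $S$ and $T$, and there are $\binom{|S\cap T|}{2}$ such subsets, so
\[
\binom{|S\cap T|}{2} \;=\; \sum_{\{i,j\} \in \binom{[r]}{2}} [\{i,j\} \subseteq S]\,[\{i,j\} \subseteq T],
\]
and, applying this with $S,T$ replaced by $\bar S, \bar T$,
\[
\binom{|\bar S\cap \bar T|}{2} \;=\; \sum_{\{i,j\} \in \binom{[r]}{2}} [\{i,j\} \cap S = \emptyset]\,[\{i,j\} \cap T = \emptyset].
\]
Defining $p_{ij}, q_{ij} \in \mathbb{R}^{\mathcal{P}([r])}$ by $(p_{ij})_S = [\{i,j\} \subseteq S]$ and $(q_{ij})_S = [\{i,j\} \cap S = \emptyset]$, the two identities say precisely that $M_r = \sum_{\{i,j\}} \left( p_{ij} p_{ij}^{\top} + q_{ij} q_{ij}^{\top} \right)$, a sum of rank-one positive semidefinite matrices (an empty sum, hence the zero matrix, when $r \le 1$). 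Therefore $M_r$ is positive semidefinite.

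I do not expect a genuine obstacle in either route. Along the first, the only point requiring care is that Theorem~\ref{thm:main-thm} is stated for $r \ge 4$, so the four small values of $r$ must be dispatched separately; the Gram-matrix identity of the second route avoids even this, at the modest cost of rewriting $\binom{\,\cdot\,}{2}$ as a count of $2$-subsets, and has the advantage of making transparent \emph{why} positivity holds --- $M_r$ simply records the inner products of the indicator functions ``$\{i,j\} \subseteq \bullet$'' and ``$\{i,j\} \cap \bullet = \emptyset$'' ranging over pairs $\{i,j\}$.
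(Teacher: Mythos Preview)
Your proposal is correct, and both routes you give are already present in the paper: the first is the intended deduction from Theorem~\ref{thm:main-thm} (the paper likewise notes that the cases $r<4$ are ``simple to check''), and your Gram-matrix decomposition is exactly Proposition~\ref{prop:setsystem} specialized to $Y=\binom{[r]}{2}$ as the paper describes in the paragraph following that proposition.
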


Our proof strategy is straightforward: we describe the eigenspaces explicitly, show they correspond to the claimed eigenvalues, compute their dimensions, and conclude when the dimensions add up to \(2^r\).  Note that several standard combinatorial identities are employed; their statements are relegated to the Appendix.

We remark that the problem of characterizing the eigenvalues and eigenspaces of \(M_r\) can be generalized in many ways, none of which we have pursued here.  For example, one might consider \(M_{r,k}\), indexed by subsets of \([r]\), with \((S,T)\)-entry equal to \(\binom{|S \cap T|}{k} + \binom{|\bar{S} \cap \bar{T}|}{k}\).  The following simple argument establishes at least semi-definiteness in even more general circumstances.

\begin{prop} \label{prop:setsystem} Let \(Y_1,\ldots,Y_n\) be subsets of a common ground set \(Y\). Let rows and columns of the symmetric \(n \times n\) matrix \(A\) be indexed
 by these subsets, and let entry \(A(Y_i,Y_j) = |Y_i \cap Y_j|\). Then matrix \(A\) is positive semi-definite. 
\end{prop}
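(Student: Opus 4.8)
The plan is to recognize $A$ as a Gram matrix. For each $i \in [n]$, let $\chi_i \in \mathbb{R}^{Y}$ be the characteristic vector of $Y_i$, whose $y$-coordinate equals $1$ when $y \in Y_i$ and equals $0$ otherwise. (If $Y$ is infinite, first replace it by the finite set $\bigcup_i Y_i$, which changes none of the intersection sizes and hence none of the entries of $A$.) Since the product $(\chi_i)_y (\chi_j)_y$ equals $1$ exactly when $y \in Y_i \cap Y_j$ and $0$ otherwise, summing over $y$ yields
\[
\langle \chi_i, \chi_j \rangle = \sum_{y \in Y} (\chi_i)_y (\chi_j)_y = |Y_i \cap Y_j| = A(Y_i, Y_j).
\]

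First I would let $B$ be the $|Y| \times n$ matrix whose $i$-th column is $\chi_i$, so that the displayed identity reads $A = B^{\ast} B$. Then for every $\bv \in \mathbb{R}^{n}$ we have $\bv^{\ast} A \bv = \bv^{\ast} B^{\ast} B \bv = (B\bv)^{\ast}(B\bv) = \|B\bv\|^{2} \ge 0$; combined with the evident symmetry $A(Y_i,Y_j) = |Y_i \cap Y_j| = |Y_j \cap Y_i| = A(Y_j,Y_i)$, this is precisely the statement that $A$ is positive semidefinite.

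There is no real obstacle here: the entire content is the observation that intersection cardinalities are inner products of indicator vectors, so that $A$ is automatically a Gram matrix. The only point needing a word of care is the possibility of an infinite ground set, dealt with by the restriction remarked above. One may also note in passing that this already recovers semidefiniteness of the $k=1$ analogue of $M_r$, since $M_{r,1}(S,T) = |S \cap T| + |\bar{S} \cap \bar{T}|$ is a sum of two matrices of the form covered by the proposition (one indexed by the sets $S$, one by their complements $\bar S$), and a sum of positive semidefinite matrices is positive semidefinite.
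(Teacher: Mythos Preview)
Your proof is correct and is essentially the same argument as the paper's: both recognize that $|Y_i \cap Y_j| = \langle \chi_i, \chi_j\rangle$ for indicator vectors, so $A$ is a Gram matrix and $\mathbf{x}^\ast A \mathbf{x}$ is a sum of squares. The paper writes the sum of squares out explicitly rather than packaging it as $A = B^\ast B$, but the content is identical.
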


\begin{proof} Let \(I_y : \mathcal{P}(Y) \rightarrow \{0,1\}\) be the indicator function of the membership of \(y \in Y\).  Then, for an arbitary vector \(\mathbf{x} \in \mathbb{C}^{n}\),
\begin{align*}
\mathbf{x}^\ast A \mathbf{x} &= \sum_{i=1}^n \sum_{j=1}^n A(Y_i,Y_j) \overline{x_i} x_j \\
&= \sum_{i=1}^n \sum_{j=1}^n \sum_{y \in Y} I_y(Y_i) I_y(Y_j) \overline{x_i} x_j \\
&= \sum_{y \in Y} \left | \sum_i I_y(Y_i) x_i \right |^2 \geq 0.
\end{align*}
\end{proof}

To apply this to \(M_r\), let \(Y\) be the set of 2-element subsets of \([r]\), and let the sets \(Y_j\) be all subsets of \(Y\) consisting of the 2-element subsets of any subset \(S_j \subseteq [r]\).  (That is, \(Y\) is the edge sets of a complete graph on \(r\) vertices, and each \(Y_j\) is the edge set of a clique therein.)  Then \(M_r = A + A^T\) if \(A = A(Y_i,Y_j)\) is the matrix given by Proposition \ref{prop:setsystem}.

In the sequel, we use some of the following notation.  We always assume \(r \geq 4\), as our main result is simple to check for \(r < 4\). The set \(\{1,\ldots,r\}\) is denoted by \([r]\); the complement (with respect to \([r]\)) of a set \(S\) is written \(\overline{S}\).  We employ the conventions that lowercase Latin letters denote nonnegative integers; \(S\), \(T\), and \(U\) denote subsets of \([r]\), which we also think of as indicator functions \([r]\rightarrow \{0,1\}\), so that \(A(x) = 1\) if \(x \in A\) and \(A(x) = 0\) if \(x \not \in A\); \(V\) denotes a vector indexed by subsets of \([r]\) (i.e., \(V \in \mathbb{Z}^{\mathcal{P}([r])}\)); and calligraphic letters (other than \(\mathcal{P}\), which denotes the power set operator) denote families of vectors.  Generally, if \(V \in \mathbb{F}^{\mathcal{I}}\) is a vector and \(\alpha \in \mathcal{I}\) is an index, then we denote the \(\alpha\)-coordinate of \(V\) by \(V_\alpha\); if \(A \in \mathbb{F}^{\mathcal{I} \times \mathcal{J}}\) is a matrix and \((\alpha,\beta) \in \mathcal{I}\times\mathcal{J}\), then we denote the \((\alpha,\beta)\)-entry of \(A\) by \(A(\alpha,\beta)\).


\section{Eigenvector families of \(M_r\)}
We begin by exhibiting seven families of eigenvectors of \(M_r\) and computing their corresponding known eigenvalues.


\subsection{Eigenvector families of \(N_r\)}
In order to show eigenvalues of the first three eigenvector families we first define an auxiliary ``quotient'' matrix \(N_r\).

\begin{definition}
   Let \(s,t \in \{0,\ldots,r\}\). We define \(N_r\) to be a matrix indexed by \(s\) and \(t\) such that
   \[N_r(s,t)=\sum_{T:|T|=t}M_r([s],T).\]
\end{definition}

\begin{lemma}
   \[N_r(s,t)=\binom{s}{2}\binom{r-2}{r-t}+\binom{r-s}{2}\binom{r-2}{t}.\]
\end{lemma}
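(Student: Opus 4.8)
The plan is to expand the definition of $N_r$ and evaluate the two binomial terms in $M_r([s],T)$ separately, in each case grouping the $t$-subsets $T$ of $[r]$ according to how they meet a fixed block. Write $m = r-s = |\overline{[s]}|$. A $t$-subset $T$ with $|[s]\cap T| = j$ is built by choosing $j$ elements of $[s]$ and $t-j$ elements of $\overline{[s]}$, so there are $\binom{s}{j}\binom{m}{t-j}$ of them; hence
\[
\sum_{T:|T|=t}\binom{|[s]\cap T|}{2} \;=\; \sum_{j}\binom{j}{2}\binom{s}{j}\binom{m}{t-j}.
\]
Grouping instead by $\ell = |\overline{[s]}\cap T|$, so that $|\overline{[s]}\cap\overline{T}| = m-\ell$ and there are $\binom{m}{\ell}\binom{s}{t-\ell}$ such $T$, gives
\[
\sum_{T:|T|=t}\binom{|\overline{[s]}\cap\overline{T}|}{2} \;=\; \sum_{\ell}\binom{m-\ell}{2}\binom{m}{\ell}\binom{s}{t-\ell}.
\]

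Next I would apply a subsumption (absorption) identity to pull the outer binomial out of each sum. For the first sum, $\binom{j}{2}\binom{s}{j} = \binom{s}{2}\binom{s-2}{j-2}$, so after reindexing $k = j-2$ it becomes $\binom{s}{2}\sum_{k}\binom{s-2}{k}\binom{m}{t-2-k}$, and Vandermonde's identity collapses this to $\binom{s}{2}\binom{r-2}{t-2}$. For the second sum, the analogous identity $\binom{m-\ell}{2}\binom{m}{\ell} = \binom{m}{2}\binom{m-2}{\ell}$ turns it into $\binom{m}{2}\sum_{\ell}\binom{m-2}{\ell}\binom{s}{t-\ell} = \binom{r-s}{2}\binom{r-2}{t}$ by Vandermonde again. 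Adding the two pieces and using the symmetry $\binom{r-2}{t-2} = \binom{r-2}{r-t}$ yields the stated formula. These three ingredients — the two forms of the absorption identity and Vandermonde — are exactly the kind of standard identities the introduction defers to the Appendix, so I would simply cite them there.

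I do not expect a genuine obstacle: the argument is entirely bookkeeping. The two points that require a little care are that the two terms of $M_r([s],T)$ are naturally indexed by \emph{different} statistics (the first by $|[s]\cap T|$, the second by $|\overline{[s]}\cap T|$), and that the absorption identities are cleanest when $s \geq 2$ and $r-s\geq 2$; when $s<2$ or $r-s<2$ the relevant term and the corresponding claimed summand are both $0$, so the identity still holds. Taking all sums over all integers (with the convention that out-of-range binomials vanish) keeps the Vandermonde steps free of boundary fuss.
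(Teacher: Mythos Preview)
Your proof is correct and follows essentially the same route as the paper's: split $M_r([s],T)$ into its two binomial terms, use the absorption identity $\binom{j}{2}\binom{s}{j}=\binom{s}{2}\binom{s-2}{j-2}$ (and its companion) to pull the outer binomial out, then collapse the remaining convolution with Vandermonde. The only cosmetic difference is that the paper indexes simultaneously by $a=|[s]\cap T|$ and $b=|\overline{[s]}\cap\overline{T}|$ under the constraint $r=s+t-a+b$, whereas you index each term by a single statistic; your explicit remark on the degenerate cases $s<2$ or $r-s<2$ is a nice addition the paper omits.
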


\begin{proof}
   \begin{align*}
      N_r(s,t)&=\sum_{T:|T|=t}M_r([s],T)\\
      &=\sum_{\substack{a,b \\ r=s+t-a+b}} \left ( \binom{a}{2}+\binom{b}{2} \right ) \mu (t;a,b),
   \end{align*}
   where \(a, b \in \{0, \ldots, r\}\) and \(\mu (t;a,b)\) is the number of ways to choose \(t\) given the constraints \(a = |[s] \cap T|\), \(b = |\bar{[s]} \cap \bar{T}|\).
   \begin{align*}
      \mu (t;a,b) &= \binom{|[s]|}{|[s] \cap T|}
      \binom{|[r] \backslash S|}{|\bar{[s]} \cap \bar{T}|}\\
      &= \binom{s}{a}\binom{r-s}{b}.
   \end{align*}
   Therefore,
   \begin{align*}
      N_r(s, t) &= \sum_{\substack{a,b \\ r=s+t-a+b}} \left ( 
      \binom{a}{2}+\binom{b}{2} \right )\binom{s}{a}\binom{r-s}{b} \\ 
      &=\sum_{\substack{a,b \\ r=s+t-a+b}} \binom{a}{2}\binom{s}{a}\binom{r-s}{b}
      + \sum_{\substack{a,b \\ r=s+t-a+b}} \binom{b}{2}\binom{s}{a}\binom{r-s}{b}.
   \end{align*}
   By (\ref{eq1}) and (\ref{eq3}),
   \begin{align*}
      \sum_{\substack{a,b \\ r=s+t-a+b}} \binom{a}{2}\binom{s}{a}\binom{r-s}{b}
      &=\binom{s}{2} \sum_{\substack{a,b \\ r=s+t-a+b}} \binom{s-2}{a-2}\binom{r-s}{b}\\
      &=\binom{s}{2} \sum_b \binom{s-2}{r-t-b}\binom{r-s}{b},\\
	  &=\binom{s}{2}\binom{r-2}{r-t}.
   \end{align*}
   By a similar argument,
   \[\sum_{\substack{a,b \\ r=s+t-a+b}} \binom{b}{2}\binom{r-s}{b}\binom{s}{a}
   =\binom{r-s}{2}\binom{r-2}{t}.\]
   Therefore,
   \[N_r(s,t)=\binom{s}{2}\binom{r-2}{r-t}+\binom{r-s}{2}\binom{r-2}{t}\]
\end{proof}


\begin{definition}
Let \(\mathcal{V}'_r\) be the set of all vectors indexed by \(\{0,\ldots,r\}\) (i.e., \(\mathbb{R}^{r+1}\)) and \(\mathcal{V}_r\) be the set of all vectors indexed by \(\mathcal{P}([r])\) (i.e., \(\mathbb{R}^{\mathcal{P}([r])}\)) . We define \(\Phi : \mathcal{V}'_r \rightarrow \mathcal{V}_r\) by
   \[(\Phi(V))_T=V_{|T|}\]
for each \(T \subset [r]\).
\end{definition}

\begin{lemma} \label{lem:shared ev}
   Let \(V\) be an eigenvector of \(N_r\). Then \(\Phi (V)\) is an eigenvector of \(M_r\) with the same eigenvalue.
\end{lemma}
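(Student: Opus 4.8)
The plan is to compute $M_r\,\Phi(V)$ directly and read off the matrix $N_r$ acting on the ``profile'' of $V$. Fix $S \subseteq [r]$ and let $V$ be an eigenvector of $N_r$ with $N_r V = \lambda V$. Then
\[
(M_r\,\Phi(V))_S \;=\; \sum_{T \subseteq [r]} M_r(S,T)\,\Phi(V)_T \;=\; \sum_{T \subseteq [r]} M_r(S,T)\,V_{|T|} \;=\; \sum_{t=0}^{r} V_t \sum_{T:\,|T|=t} M_r(S,T).
\]
The crucial observation is that $M_r(S,T)=\binom{|S\cap T|}{2}+\binom{|\bar S\cap\bar T|}{2}$ is invariant under applying any permutation $\pi$ of the ground set $[r]$ simultaneously to $S$ and $T$, since $|\pi(S)\cap\pi(T)|=|S\cap T|$ and $\overline{\pi(S)}\cap\overline{\pi(T)}=\pi(\bar S\cap\bar T)$. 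Consequently, for each fixed size $t$, as $T$ ranges over all size-$t$ subsets so does $\pi(T)$, and therefore
\[
\sum_{T:\,|T|=t} M_r(S,T) \;=\; \sum_{T:\,|T|=t} M_r(\pi(S),T).
\]
Choosing $\pi$ with $\pi(S)=[\,|S|\,]$ identifies this sum with $\sum_{T:\,|T|=t} M_r([\,|S|\,],T) = N_r(|S|,t)$.

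Substituting back yields
\[
(M_r\,\Phi(V))_S \;=\; \sum_{t=0}^{r} N_r(|S|,t)\,V_t \;=\; (N_r V)_{|S|} \;=\; \lambda\,V_{|S|} \;=\; \lambda\,\Phi(V)_S,
\]
and since $S$ was arbitrary, $M_r\,\Phi(V) = \lambda\,\Phi(V)$. Finally $\Phi$ is injective: from $\Phi(V)_{[k]} = V_k$ for each $k \in \{0,\ldots,r\}$ it follows that $\Phi(V)=0$ forces $V=0$. Hence $\Phi(V)\neq 0$ (as $V\neq 0$), so $\Phi(V)$ is a genuine eigenvector of $M_r$ with eigenvalue $\lambda$.

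I do not expect any real obstacle here; the single point that warrants care is the permutation-invariance step, which is exactly what collapses a full row sum of $M_r$ over all $T$ of a given size into one entry of the quotient matrix $N_r$. Equivalently, $\Phi$ realizes the inclusion of the permutation-invariant subspace of $\mathbb{R}^{\mathcal{P}([r])}$ (vectors constant on each size class), and $M_r$ restricted to that subspace is $N_r$ under the obvious identification; one could phrase the argument in that representation-theoretic language instead, but the direct computation above is the most economical.
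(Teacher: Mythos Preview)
Your proof is correct and follows essentially the same direct computation as the paper: expand $(M_r\Phi(V))_S$, group by $|T|$, and recognize the inner sum as $N_r(|S|,t)$. You are in fact more careful than the paper, since you explicitly justify via permutation invariance why $\sum_{|T|=t} M_r(S,T)$ depends only on $|S|$ (the paper's definition of $N_r$ uses $S=[s]$, and this step is left implicit there), and you verify that $\Phi(V)\neq 0$.
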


\begin{proof}
   Suppose \(V\) is an eigenvalue of \(N_r\) with eigenvalue \(\lambda\). Then, for each \(S \subseteq [r]\),
   \begin{align*}
      \left ( M_r \Phi (V) \right )_S &= \sum_{T \subseteq [r]} M_r(S,T) (\Phi (V))_T\\
      &=\sum_{|T|=0}^r \sum_{T} M_r(S,T) (\Phi (V))_T\\
      &=\sum_{|T|=0}^r N_r(|S|,|T|)V_T\\
      &= ( N_r V)(|S|) = \lambda V(|S|) = \lambda (\Phi (V))_S
   \end{align*}
\end{proof}


Next, we use Lemma \ref{lem:shared ev} to describe three eigenvector families of \(M_r\) arising from eigenvectors of \(N_r\).  The first family, \(\mathcal{A}_r\), has only one element.

\begin{prop}
Let \(\mathcal{A}_r\) be the (singleton) set whose one element is \(\Phi(V)\), where \(V\) is the vector with
   \[V_t = 
      \begin{cases}
         1 & t=r-1\\
         2-r & t=r\\
         0 & t \leq r-2.
      \end{cases}\]
for \(t \in \{0,\ldots,r\}\).  Then elements of \(\mathcal{A}_r\) are eigenvectors of \(M_r\) associated with eigenvalue \(0\).
\end{prop}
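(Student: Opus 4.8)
The plan is to use Lemma \ref{lem:shared ev} to push everything down to the small matrix \(N_r\): since \(\mathcal{A}_r = \{\Phi(V)\}\) and \(\Phi\) sends eigenvectors of \(N_r\) to eigenvectors of \(M_r\) with the same eigenvalue, it suffices to verify that the displayed vector \(V \in \mathcal{V}'_r\) satisfies \(N_r V = 0\), i.e., that \(V\) is an eigenvector of \(N_r\) with eigenvalue \(0\). This is now a finite, explicit computation because \(V\) is supported on only the two coordinates \(t = r-1\) and \(t = r\), so \((N_r V)(s) = N_r(s,r-1) + (2-r)\,N_r(s,r)\) for every \(s \in \{0,\ldots,r\}\).

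Next I would evaluate \(N_r(s,r-1)\) and \(N_r(s,r)\) using the closed form \(N_r(s,t) = \binom{s}{2}\binom{r-2}{r-t} + \binom{r-s}{2}\binom{r-2}{t}\) from the previous lemma. The key observation is that the second summand collapses in both cases: \(\binom{r-2}{r-1} = 0\) and \(\binom{r-2}{r} = 0\). Hence \(N_r(s,r-1) = \binom{s}{2}\binom{r-2}{1} = (r-2)\binom{s}{2}\) and \(N_r(s,r) = \binom{s}{2}\binom{r-2}{0} = \binom{s}{2}\).

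Assembling these, \((N_r V)(s) = (r-2)\binom{s}{2} + (2-r)\binom{s}{2} = 0\) for all \(s\), so \(N_r V = 0\); applying Lemma \ref{lem:shared ev} with \(\lambda = 0\) shows \(\Phi(V)\) is an eigenvector of \(M_r\) with eigenvalue \(0\), as claimed. There is essentially no obstacle in the argument itself — it is a two-line binomial computation once the reduction through \(N_r\) and \(\Phi\) is in place; the only "content" is the choice of the coefficients \(1\) and \(2-r\), which are exactly what is forced by the identity \(N_r(s,r-1) = (r-2)N_r(s,r)\). (If one prefers to avoid quoting Lemma \ref{lem:shared ev}, the same computation can be carried out directly on \(M_r\): for any \(S\), \(\sum_{|T|=r-1} M_r(S,T) = N_r(|S|,r-1)\) and \(M_r(S,[r]) = N_r(|S|,r)\), and the identical cancellation occurs.)
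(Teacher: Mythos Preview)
Your proof is correct and follows essentially the same route as the paper: reduce to \(N_r\) via Lemma~\ref{lem:shared ev}, use the closed form for \(N_r(s,t)\) to evaluate \(N_r(s,r-1)\) and \(N_r(s,r)\), and observe the cancellation \((r-2)\binom{s}{2} + (2-r)\binom{s}{2} = 0\). Your explicit remark that \(\binom{r-2}{r-1}=\binom{r-2}{r}=0\) kills the \(\binom{r-s}{2}\) term is a helpful clarification that the paper leaves implicit.
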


\begin{proof}
   For any \(s \in \{0, \ldots, r\}\),
   \begin{align*}
      (N_r V)_s &=N_r(s, r-1) - (r-2)N_r (s, r)\\
      &= \binom{s}{2}\binom{r-2}{1} - (r-2) \binom{s}{2}
      \binom{r-2}{0} \\
      &=(r-2)\binom{s}{2}(1-1)=0.
   \end{align*}
   Therefore \(N_r V = 0 \cdot V\). By Lemma \ref{lem:shared ev}, this implies the elements of \(\mathcal{A}_r\) are eigenvectors of \(M_r\) with eigenvalue \(0\). 
\end{proof}


The second family, \(\mathcal{B}_r\), is also a singleton.

\begin{prop}
   Let \(\mathcal{B}_r\) be the (singleton) set whose one element is \(\Phi (V)\), where \(V\) is a vector defined by
   \[V_t=\binom{t}{2}+\binom{r-t}{2}.\]
   for \(t \in \{0,\ldots,r\}\).  Then the elements of \(\mathcal{B}_r\) are eigenvectors of \(M_r\) associated with eigenvalue \(2^{r-4}(r^2-r+2)\).
\end{prop}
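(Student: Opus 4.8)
The plan is to invoke Lemma~\ref{lem:shared ev}: since \(\mathcal{B}_r = \{\Phi(V)\}\) with \(V_t = \binom{t}{2} + \binom{r-t}{2}\), it suffices to show that \(V \in \mathbb{R}^{r+1}\) is an eigenvector of the quotient matrix \(N_r\) with eigenvalue \(2^{r-4}(r^2-r+2)\); the ``singleton'' claim is immediate from the definition of \(\Phi\). Using the closed form \(N_r(s,t) = \binom{s}{2}\binom{r-2}{r-t} + \binom{r-s}{2}\binom{r-2}{t}\) from the preceding lemma, I would write \((N_r V)_s = \binom{s}{2}\,\sigma_1 + \binom{r-s}{2}\,\sigma_2\), where \(\sigma_1 = \sum_{t=0}^r \binom{r-2}{r-t}\bigl(\binom{t}{2} + \binom{r-t}{2}\bigr)\) and \(\sigma_2 = \sum_{t=0}^r \binom{r-2}{t}\bigl(\binom{t}{2} + \binom{r-t}{2}\bigr)\); the goal then becomes \((N_r V)_s = 2^{r-4}(r^2-r+2)\,V_s\) for every \(s\).

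The key simplification is that \(\sigma_1 = \sigma_2\): reindexing \(\sigma_1\) by \(u = r-t\) sends \(\binom{r-2}{r-t} \mapsto \binom{r-2}{u}\) and \(\binom{t}{2} + \binom{r-t}{2} \mapsto \binom{r-u}{2} + \binom{u}{2}\), which is exactly the summand of \(\sigma_2\). Writing \(P := \sum_t \binom{r-2}{t}\binom{t}{2}\) and \(Q := \sum_t \binom{r-2}{t}\binom{r-t}{2}\), we therefore get \((N_r V)_s = (P+Q)\bigl(\binom{s}{2} + \binom{r-s}{2}\bigr) = (P+Q)\,V_s\), so the whole proposition reduces to the scalar identity \(P + Q = 2^{r-4}(r^2-r+2)\). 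In particular the eigenvector property holds automatically once this identity is verified.

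To evaluate \(P\), the absorption identity \(\binom{t}{2}\binom{r-2}{t} = \binom{r-2}{2}\binom{r-4}{t-2}\) together with \(\sum_t \binom{r-4}{t-2} = 2^{r-4}\) (the binomial theorem) gives \(P = \binom{r-2}{2}2^{r-4}\). For \(Q\) I would reindex by \(j = r-2-t\), so that \(\binom{r-2}{t} = \binom{r-2}{j}\) and \(\binom{r-t}{2} = \binom{j+2}{2} = \binom{j}{2} + 2j + 1\); then \(Q = \sum_j \binom{r-2}{j}\binom{j}{2} + 2\sum_j j\binom{r-2}{j} + \sum_j \binom{r-2}{j} = \binom{r-2}{2}2^{r-4} + (r-2)2^{r-2} + 2^{r-2}\), using the same absorption identity and \(\sum_j j\binom{m}{j} = m2^{m-1}\). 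Adding and simplifying, \(P + Q = 2^{r-4}\bigl[(r-2)(r-3) + 4(r-1)\bigr] = 2^{r-4}(r^2-r+2)\), as required. The main obstacle is the evaluation of \(Q\): because the upper index \(r-2\) of \(\binom{r-2}{t}\) does not match the \(r-t\) inside \(\binom{r-t}{2}\), no single Vandermonde-type convolution applies directly, and one must first break \(\binom{r-t}{2}\) into pieces of degree at most \(1\) in \(t\) (equivalently, apply Pascal's rule twice) before summing termwise. Everything else — the reindexing that forces \(\sigma_1 = \sigma_2\) and the closing polynomial arithmetic — is routine.
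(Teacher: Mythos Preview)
Your proof is correct and follows essentially the same route as the paper: both invoke Lemma~\ref{lem:shared ev} and reduce to showing that the two sums multiplying \(\binom{s}{2}\) and \(\binom{r-s}{2}\) in \((N_rV)_s\) each equal \(2^{r-4}(r^2-r+2)\). The only differences are cosmetic: you observe the symmetry \(\sigma_1=\sigma_2\) up front (so you evaluate one sum instead of two) and you compute that sum from the absorption identity and \(\sum_j j\binom{m}{j}=m2^{m-1}\) rather than quoting the packaged identity~(\ref{eq5}) as the paper does.
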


\begin{proof}
   For any \(s \in \{0, \ldots, r\}\),
   \begin{align*}
      (N_r V)_s &= \sum_t \left[ \binom{s}{2}\binom{r-2}{r-t}
      + \binom{r-s}{2}\binom{r-2}{t}\right] \left[ \binom{t}{2} 
      + \binom{r-t}{2} \right]\\
      &=\binom{s}{2}P + \binom{r-s}{2}Q,
   \end{align*}
   where
   \begin{align*}
      P &= \sum_t \left[ \binom{t}{2}\binom{r-2}{t-s} 
      + \binom{r-t}{2}\binom{r-2}{r-t}\right]\\
      Q &= \sum_t \left[ \binom{t}{2}\binom{r-2}{t} 
      + \binom{r-t}{2}\binom{r-2}{r-t-2}\right].
   \end{align*}
   By (\ref{eq5}),
   \begin{align*}
      P &= 2^{r-5}[(r-2)(r-3) + 8(r-2) + 8 + (r-2)(r-3)]\\
      &=2^{r-4}(r^2-r+2)
   \end{align*}
   and
   \begin{align*}
      Q &= 2^{r-5}[(r-2)(r-3) +(r-2)(r-3) + 8(r-2) +8]\\
      &=2^{r-4}(r^2-r+2).
   \end{align*}
   Therefore,
   \begin{align*}
      (N_r V)_s &= 2^{r-4}(r^2-r+2) \left[\binom{s}{2} + \binom{r-s}{2} \right]\\
      &=2^{r-4}(r^2-r+2)V_s.
   \end{align*}
   Thus, \(N_r V = 2^{r-4}(r^2-r+2)V\), which by Lemma \ref{lem:shared ev} implies that the elements of \(\mathcal{B}_r\) are eigenvectors of \(M_r\) with eigenvalue \(2^{r-4}(r^2-r+2)\).
\end{proof}


The next family, \(\mathcal{C}_r\), is also a singleton.

\begin{prop}
   Let \(\mathcal{C}_r\) be the (singleton) set whose one element is \(\Phi (V)\), where \(V\) is a vector defined by
   \[V_t=\binom{t}{2}-\binom{r-t}{2}.\]
   Then the elements of \(\mathcal{C}_r\) are eigenvectors of \(M_r\) with eigenvalue \(2^{r-2}(r-1)\).
\end{prop}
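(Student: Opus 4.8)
The plan is to mirror the structure of the proofs for the families $\mathcal{A}_r$ and $\mathcal{B}_r$: since $\mathcal{C}_r = \{\Phi(V)\}$ with $V_t = \binom{t}{2} - \binom{r-t}{2}$, by Lemma~\ref{lem:shared ev} it suffices to show that $V$ is an eigenvector of $N_r$ with eigenvalue $2^{r-2}(r-1)$. So I would compute $(N_r V)_s$ directly using the closed form $N_r(s,t) = \binom{s}{2}\binom{r-2}{r-t} + \binom{r-s}{2}\binom{r-2}{t}$ and show it equals $2^{r-2}(r-1)\left(\binom{s}{2} - \binom{r-s}{2}\right)$.

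Expanding, $(N_r V)_s = \binom{s}{2} P' + \binom{r-s}{2} Q'$, where
\begin{align*}
P' &= \sum_t \binom{r-2}{r-t}\left(\binom{t}{2} - \binom{r-t}{2}\right), \\
Q' &= \sum_t \binom{r-2}{t}\left(\binom{t}{2} - \binom{r-t}{2}\right).
\end{align*}
Substituting $t \mapsto r-t$ in the sum defining $Q'$ shows $Q' = -P'$, so $(N_r V)_s = P'\left(\binom{s}{2} - \binom{r-s}{2}\right)$, and the whole problem reduces to evaluating the single scalar $P'$ and checking $P' = 2^{r-2}(r-1)$. By the reindexing trick again, $P' = \sum_t \binom{r-2}{t}\binom{r-t}{2} - \sum_t\binom{r-2}{t}\binom{r-t}{2}$\dots{} more carefully: $P' = \sum_t \binom{r-2}{t}\binom{r-t}{2} - \sum_t\binom{r-2}{t}\binom{t}{2}$ after sending $t\mapsto r-t$ in the first term, so I need the two sums $\sum_t \binom{r-2}{t}\binom{t}{2}$ and $\sum_t \binom{r-2}{t}\binom{r-t}{2}$, both of which are instances of the hypergeometric/binomial identities collected in the Appendix (the same identity (\ref{eq5}) invoked for $\mathcal{B}_r$, applied with the appropriate shifts). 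Using $\binom{t}{2}\binom{r-2}{t} = \binom{r-2}{2}\binom{r-4}{t-2}$ and Vandermonde-type summation gives a clean power-of-two times polynomial expression for each, and their difference should collapse to $2^{r-2}(r-1)$.

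The main obstacle I anticipate is purely bookkeeping: getting the index shifts and the boundary terms (small $t$, or $t$ near $r$, where the binomial coefficients $\binom{t}{2}$ or $\binom{r-2}{t-2}$ vanish or the identities need care) exactly right, and confirming the arithmetic simplification $P' = 2^{r-2}(r-1)$ rather than some nearby quantity. There is no conceptual difficulty once Lemma~\ref{lem:shared ev} and the closed form for $N_r(s,t)$ are in hand; the antisymmetry $V_t = -V_{r-t}$ is the structural feature that makes $P'$ and $Q'$ collapse to a single scalar, exactly as the symmetry $V_t = V_{r-t}$ did in the $\mathcal{B}_r$ case. I would double-check the final eigenvalue against a small case such as $r = 4$ to guard against an off-by-a-factor error.
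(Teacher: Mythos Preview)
Your proposal is correct and follows essentially the same approach as the paper: both reduce via Lemma~\ref{lem:shared ev} to computing $(N_rV)_s = \binom{s}{2}P + \binom{r-s}{2}Q$ and then evaluate the scalars $P$, $Q$ using identity~(\ref{eq5}). Your observation that the reindexing $t \mapsto r-t$ immediately gives $Q = -P$ is a mild streamlining of the paper's argument, which computes $P$ and $Q$ separately via~(\ref{eq5}) and only afterward sees they are negatives of one another.
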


\begin{proof}
   For any \(s \in \{0, \ldots, r\}\),
   \begin{align*}
      (N_r V)_s &= \sum_t \left[ \binom{s}{2}\binom{r-2}{r-t}
      + \binom{r-s}{2}\binom{r-2}{t}\right] \left[ \binom{t}{2} 
      - \binom{r-t}{2} \right]\\
      &=\binom{s}{2}P + \binom{r-s}{2}Q,
   \end{align*}
   where
   \begin{align*}
      P &= \sum_t \left[ \binom{t}{2}\binom{r-2}{t-s} 
      - \binom{r-t}{2}\binom{r-2}{r-t}\right]\\
      Q &= \sum_t \left[ \binom{t}{2}\binom{r-2}{t} 
      - \binom{r-t}{2}\binom{r-2}{r-t-2}\right].
   \end{align*}
   By (\ref{eq5}),
   \begin{align*}
      P &= 2^{r-5}[(r-2)(r-3) + 8(r-2) + 8 - (r-2)(r-3)]\\
      &=2^{r-2}(r-1)
   \end{align*}
   and
   \begin{align*}
      Q &= 2^{r-5}[(r-2)(r-3) - ((r-2)(r-3) + 8(r-2) +8)]\\
      &=-2^{r-2}(r-1).
   \end{align*}
   Therefore,
   \begin{align*}
      (N_r V)_s &= 2^{r-2}(r-1) \left( \binom{s}{2} - \binom{r-s}{2} \right)\\
      &=2^{r-2}(r-1)V_s.
   \end{align*}
   Thus, \(N_r V = 2^{r-2}(r-1)V\), which by Lemma \ref{lem:shared ev} implies that all elements of \(\mathcal{C}_r\) are eigenvectors of \(M_r\) with eigenvalue \(2^{r-2}(r-1)\).
\end{proof}


\subsection{Eigenvector Families of \(M_r\)}
In this section we compute \((M_r V)_S\) for various vectors \(V \in \mathbb{R}^{\mathcal{P}([r])}\) and sets \(S \subseteq [r]\), using the definition of matrix multiplication.  Each calculation resembles the following template:
\begin{align*}
   (M_r V)_S &= \sum_T M_r(S,T) V_T\\
   &= \sum_T \left[ \binom{|S \cap T|}{2} + \binom{|\bar{S} \cap \bar{T}|}{2}
   \right]V_T\\
   &= \sum_{\substack{a,b \\ a = |S \cap T| \\ b = |\bar{S} \cap \bar{T}|}}
   \left[ \binom{a}{2} + \binom{b}{2} \right]V_T\\
   &= \sum_{a,b} \left[ \binom{a}{2} + \binom{b}{2} \right] \chi (S;a,b) \cdot V,
\end{align*}
where \(\chi(S;a,b) \in \mathbb{R}^{\mathcal{P}([r])}\) is a vector indexed by the subsets of \([r]\), with coordinates given by
\[\chi(S;a,b) = \begin{cases}
1 & \textrm{ if } |T| = r-|S|+a-b\\
0 & \textrm{ if } |T| \neq r-|S|+a-b.
\end{cases}\]
We then show that 
\[\sum_{a,b} \left[ \binom{a}{2} + \binom{b}{2} \right]   \chi (S;a,b) \cdot V = \lambda(r) V\]
for all \(S\) where \(\lambda(r)\) is a constant depending only on \(r\).

%

\begin{prop}\label{prop:Dvecs}
Let \(\mathcal{D}_r\) be the set of all vectors \(V\) indexed by subsets \(T \subset [r]\), with
   \[V_T =
   \begin{cases}
   0 & U \not\subseteq T\\
   (-1)^{|T|} & U \subseteq T
   \end{cases}\]
   for some fixed \(U \subseteq [r]\) such that \(|U| < r-2\). Then all elements of \(\mathcal{D}_r\) are eigenvectors of \(M_r\) with eigenvalue \(0\). 
\end{prop}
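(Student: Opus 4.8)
The plan is to follow the template displayed just before the statement. Fix $S \subseteq [r]$ and expand $(M_r V)_S = \sum_T M_r(S,T) V_T$, observing that only sets $T$ with $U \subseteq T$ contribute. Write each such $T$ as a disjoint union $T = U \sqcup T'$ with $T' \subseteq W := [r] \setminus U$, and set $m := |W| = r - |U|$. Since $U \subseteq T$, we have $\bar T \subseteq W$, so $|\bar S \cap \bar T| = |W \setminus S| - |T' \cap (W \setminus S)|$; also $|S \cap T| = |S \cap U| + |T' \cap S|$ and $(-1)^{|T|} = (-1)^{|U|}(-1)^{|T'|}$. Partition $W$ into $W \cap S$ of size $q$ and $W \setminus S$ of size $m - q$, index $T'$ by $j := |T' \cap S|$ and $k := |T' \cap (W \setminus S)|$, and put $p := |S \cap U|$. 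Then
\[
(M_r V)_S = (-1)^{|U|}\sum_{j=0}^{q}\sum_{k=0}^{m-q}\binom{q}{j}\binom{m-q}{k}(-1)^{j+k}\left[\binom{p+j}{2} + \binom{m-q-k}{2}\right].
\]

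I would then split the bracket into its two terms. In the $\binom{p+j}{2}$ term the summand is independent of $k$, so the $k$-sum is $\sum_{k}\binom{m-q}{k}(-1)^k = (1-1)^{m-q}$, which vanishes when $q < m$; when $q = m$ the surviving sum $\sum_{j=0}^{m}\binom{m}{j}(-1)^j\binom{p+j}{2}$ is an alternating binomial sum of a degree-$2$ polynomial in $j$, hence $0$ because $m \geq 3$. Symmetrically, in the $\binom{m-q-k}{2}$ term the summand is independent of $j$, the $j$-sum is $(1-1)^q$ which vanishes when $q > 0$, and for $q = 0$ the surviving sum $\sum_{k=0}^{m}\binom{m}{k}(-1)^k\binom{m-k}{2}$ is again an alternating binomial sum of a degree-$2$ polynomial in $k$, hence $0$ since $m \geq 3$. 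Adding the two parts gives $(M_r V)_S = 0$ for every $S$, so every element of $\mathcal{D}_r$ is an eigenvector of $M_r$ for the eigenvalue $0$.

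The hypothesis $|U| < r - 2$ is used at exactly one point: it forces $m = r - |U| \geq 3$, which is precisely what makes the two degenerate cases $q = 0$ and $q = m$ work, since there one binomial sum collapses to a single term and the vanishing must come instead from the finite-difference identity $\sum_{i=0}^{n}(-1)^i\binom{n}{i}P(i) = 0$ whenever $\deg P < n$ (one of the identities collected in the Appendix). I expect this small case analysis to be the only delicate point; the rest is the bookkeeping involved in re-encoding $T$ as $U \sqcup T'$ and sorting $T'$ by its intersections with $S$.
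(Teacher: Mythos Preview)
Your argument is correct and follows essentially the same route as the paper: both factor the double sum into a product of two alternating binomial sums (your $j$- and $k$-sums are affine reparametrizations of the paper's $a$- and $b$-sums, with $a=p+j$ and $b=m-q-k$), observe that one factor is $(1-1)^n$ and hence vanishes generically, and handle the boundary cases $q=m$ and $q=0$ by the vanishing of alternating sums of degree-$2$ polynomials. One small slip: the finite-difference identity $\sum_{i}(-1)^i\binom{n}{i}P(i)=0$ for $\deg P<n$ is not actually among the Appendix identities (\ref{eq1})--(\ref{eq5}), though it is standard and your use of it is sound.
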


We fix such a \(U\) and proceed to show the corresponding \(V\) is an eigenvector.  A few lemmas will simplify the proof.

   \begin{lemma} \label{lemma:firststepforD}
      \[\chi (S;a,b) \cdot V = (-1)^{r-|S|+a-b} \binom{|S|-|S \cap U|}{a-|S \cap U|} \binom{r-|S|-|U|+|S \cap U|}{b}\]
   \end{lemma}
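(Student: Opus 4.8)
The plan is to evaluate the inner product $\chi(S;a,b)\cdot V$ directly as a signed count of subsets of $[r]$, and then compute that count via a block decomposition of $[r]$ determined by $S$ and $U$.

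First I would write $\chi(S;a,b)\cdot V = \sum_{T} V_T$, the sum running over all $T\subseteq[r]$ with $|S\cap T| = a$ and $|\overline{S}\cap\overline{T}| = b$. Every such $T$ satisfies $|T| = |S\cap T| + |\overline{S}\cap T| = a + \bigl(|\overline{S}| - |\overline{S}\cap\overline{T}|\bigr) = r - |S| + a - b$, a value independent of $T$; so, using $V_T = (-1)^{|T|}$ when $U\subseteq T$ and $V_T = 0$ otherwise, the sign pulls out of the sum and
\[
\chi(S;a,b)\cdot V = (-1)^{\,r-|S|+a-b}\,\bigl|\{\,T\subseteq[r] : U\subseteq T,\ |S\cap T| = a,\ |\overline{S}\cap\overline{T}| = b\,\}\bigr| .
\]

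It remains to count the sets $T$ in this collection. Partition $[r]$ into the four blocks $S\cap U$, $S\setminus U$, $U\setminus S$, and $[r]\setminus(S\cup U)$. The requirement $U\subseteq T$ forces both $S\cap U$ and $U\setminus S$ to lie in $T$, so a valid $T$ is determined by its intersections with the remaining two blocks, which may be chosen independently. Since $S\cap U$ already contributes $|S\cap U|$ elements to $S\cap T$, exactly $a - |S\cap U|$ of the $|S| - |S\cap U|$ elements of $S\setminus U$ must be put into $T$, giving a factor $\binom{|S|-|S\cap U|}{a-|S\cap U|}$. The block $[r]\setminus(S\cup U)$ has size $r - |S\cup U| = r - |S| - |U| + |S\cap U|$ and lies entirely in $\overline{S}\cap\overline{U}$; because $U\setminus S$ was forced into $T$, the elements of $\overline{S}\cap\overline{T}$ are exactly those of $[r]\setminus(S\cup U)$ omitted from $T$, so exactly $b$ of them must be omitted, giving a factor $\binom{r-|S|-|U|+|S\cap U|}{b}$. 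Multiplying these two independent counts and substituting above yields the claimed identity; the convention $\binom{n}{k}=0$ for $k<0$ or $k>n$ automatically handles the degenerate cases (such as $a < |S\cap U|$), where the collection is empty.

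The argument is pure bookkeeping, so I do not anticipate a genuine obstacle; the one point needing care is the interplay between the constraint $U\subseteq T$ and the two cardinality constraints — specifically, that $U\setminus S$ is forced into $T$ (hence out of $\overline{T}$), so the condition $|\overline{S}\cap\overline{T}|=b$ restricts only the block $\overline{S}\cap\overline{U}$, and that $S\cap U\subseteq T$ consumes $|S\cap U|$ of the target $a=|S\cap T|$, leaving $a-|S\cap U|$ to be placed within $S\setminus U$.
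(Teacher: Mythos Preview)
Your argument is correct and follows essentially the same route as the paper: pull out the constant sign $(-1)^{|T|}=(-1)^{r-|S|+a-b}$, then count the admissible $T$ by choosing $T\cap(S\setminus U)$ and $T\cap(\overline{S}\setminus U)$ independently. If anything, your block decomposition is stated more carefully than the paper's, which somewhat loosely asserts that $T_2=T\cap(\overline{S}\setminus U)$ has $b$ elements (really its complement in $\overline{S}\setminus U$ does, but the binomial coefficient is the same).
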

   
   \begin{proof}
      For ease of notation we define \( S \thicksim T\) to mean \(|S \cap T| = a\) and \(|\bar{S} \cap \bar{T}| = b\). Note that \(S \thicksim T\) implies that \(r = |S|+|T|-|S\cap T|+ |\bar{S} \cap \bar{T}|\).
      \[\chi (S;a,b) \cdot V = \sum_{\substack{T \\ S \thicksim T\\ U \subseteq T}} (-1)^{|T|}=(-1)^{|T|}\sum_{\substack{T \\ S \thicksim T \\ U \subseteq T}} 1.\]
      This implies that the value of \(\chi (S;a,b) \cdot V\) is given by \((-1)^{|T|}\) times the number of ways to choose \(T\) such that \(|S \cap T| = a\), \(|\bar{S} \cap \bar{T}| = b\) and \(U \subseteq T\). Let \(T_1=T \cap ( S \backslash U )\) and \(T_2 = T \cap ( \bar{S} \backslash U) \). \(T_1\) consists of \(a-|S \cap U|\) elements chosen from a set of \(|S \backslash U|\) elements and \(T_2\) consists of \(b\) elements chosen from a set of \(|\bar{S} \backslash U|\) elements. Therefore
      \begin{align*}
      \chi (S;a,b) \cdot V &= (-1)^{r-|S|+a-b}|T_1||T_2| \\
      &= (-1)^{r-|S|+a-b}\binom{|S \backslash U|}{a - |S \cap U|}
      \binom{|\bar{S} \backslash U|}{b}\\
      &= (-1)^{r-|S|+a-b}\binom{|S|-|S \cap U|}{a-|S \cap U|} \binom{r-|S|-|U|+|S \cap U|}{b}
      \end{align*}
   \end{proof}
   
   Using the template argument above, we can rewrite Lemma \ref{lemma:firststepforD} as 
   \[
	(M_r V)_S =\sum_{a,b} \left[ \binom{a}{2}+\binom{b}{2} \right] (-1)^{r-|S|+a-b} 	\binom{s-|U'| }{a - |U'|}\binom{r-s-|U|+|U'|}{b}.
	\]
	the terms of which we group as
	\begin{equation} \label{eqPQs}
	(M_r {V})_S =(PQ+P'Q')(-1)^{r-|S|},
	\end{equation}
	where
    \begin{align*}
		P &= \sum_a (-1)^a \binom{a}{2}\binom{|S|-|U'|}{a-|U'|}\\
		Q &= \sum_b (-1)^b \binom{r-|S|-|U|+|U'|}{b}\\
		P' &= \sum_a (-1)^a \binom{|S|-|U'|}{a-|U'|}\\
		Q' &= \sum_b (-1)^b \binom{b}{2}\binom{r-|S|-|U|+|U'|}{b}.\\
	\end{align*}
	\begin{lemma}\label{lemma:PQ}
		\(PQ = 0\) if \(|U| < r-2\)
	\end{lemma}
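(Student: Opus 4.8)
The plan is to analyze $P$ and $Q$ independently and show they cannot both be nonzero once $|U| < r-2$. Write $s = |S|$ and $u' = |S \cap U|$, so that $Q = \sum_b (-1)^b \binom{n}{b}$ with $n = r - s - |U| + u'$, and $P = \sum_a (-1)^a \binom{a}{2}\binom{m}{a - u'}$ with $m = s - u'$. The first thing to observe is that $n = |\bar{S}| - (|U| - |S\cap U|) = |\bar{S}| - |U\setminus S| = |\bar{S}\setminus U| \ge 0$ and $m = |S\setminus U| \ge 0$, so both are honest nonnegative integers.

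First I would dispatch $Q$ by the binomial theorem: $Q = \sum_{b=0}^{n}(-1)^b\binom{n}{b} = (1-1)^n$, which is $0$ unless $n = 0$. Hence $Q \neq 0$ forces $\bar{S}\setminus U = \emptyset$, i.e. $\bar{S}\subseteq U$.

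Next I would handle $P$. Shifting the summation index by putting $c = a - u'$ gives $P = (-1)^{u'}\sum_{c=0}^{m}(-1)^c\binom{c+u'}{2}\binom{m}{c}$. The map $c \mapsto \binom{c+u'}{2}$ is a polynomial in $c$ of degree $2$, and the standard fact that $\sum_{c=0}^{m}(-1)^c\binom{m}{c}p(c) = 0$ whenever $\deg p < m$ (the $m$-th finite difference of such a $p$ vanishes) shows that $P = 0$ whenever $m \ge 3$. Thus $P \neq 0$ forces $m = |S\setminus U| \le 2$.

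Finally I would combine the two. If $PQ \neq 0$, then $\bar{S}\subseteq U$ and $|S\setminus U| \le 2$ both hold. Under $\bar{S}\subseteq U$ we get $|U| = |U\cap S| + |U\cap\bar{S}| = u' + |\bar{S}| = u' + (r-s)$, so $|S\setminus U| = s - u' = s - (|U| - r + s) = r - |U|$. The inequality $|S\setminus U| \le 2$ then reads $|U| \ge r-2$. Contrapositively, if $|U| < r-2$ then $PQ = 0$, which is the claim. I do not expect a real obstacle here; the only mildly delicate points are the index shift in $P$ together with the degree count, and the observation that $\bar S \subseteq U$ pins $|S\setminus U|$ down to exactly $r - |U|$. (By the symmetric argument, swapping the roles of the two binomial factors, one also gets $P'Q' = 0$ under the same hypothesis, which is what ultimately makes $(M_rV)_S = 0$ in Proposition \ref{prop:Dvecs}, but that is beyond the present statement.)
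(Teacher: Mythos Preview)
Your proof is correct and follows essentially the same route as the paper: show $Q=0$ unless $r-|S|-|U|+|U'|=0$, show $P=0$ whenever $|S|-|U'|>2$, and combine to force $|U|\ge r-2$ when $PQ\neq 0$. The only difference is cosmetic---you handle $P$ via the finite-difference/degree argument in one stroke, whereas the paper expands $\binom{a}{2}$ using identity~(\ref{eq1}) and kills the three resulting alternating sums separately; your version is a bit cleaner but not a different idea.
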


	\begin{proof}
		\(Q=0\) if \(r-|S|-|U|+|U'| \neq 0\).
		By (\ref{eq1})
        \begin{align*}
			P =& \sum_a (-1)^a \left [\binom{|S|-|U'|}{2}\binom{|S|-|U'|-2}
            	{a-|U'|-2} \right .\\ 
			&\left . + (a-|U'|)|U'|\binom{|S|-|U'|}{a-|U'|}+\binom{|U'|}
            	{2}\binom{|S|-|U'|}{a-|U'|} \right ].
		\end{align*}
		If \(|S|-|U'|>0\) then 
        \[
        \binom{|S|-|U'|}{2} \sum_a (-1)^a \binom{|S|-|U'|-2}{a-|U'|-2} = 0.
        \]
        Likewise, if \(|S|-|U'|>0\) then
        \[
        \binom{|U'|}{2} \sum_a (-1)^{a}\binom{|S|-|U'|}{a-|U'|}=0.
        \]
        If \(|S|-|U'|>1\) then
        \[
        |U'| \sum_a (-1)^{a}(a-|U'|)\binom{|S|-|U'|}{a-|U'|}=0.
        \]
        Therefore, if \(|S|-|U'| > 2\) then  \(P = 0\). If \(Q\) is not equal to \(0\) then \(r-|S|-|U|+|U'|=0\) and we substitute to get \(PQ = 0\) if \(r-2>|U|\).
	\end{proof}

	\begin{lemma}\label{lemma:P'Q'}
		If \(|U'| < r-2\) then \(P'Q' = 0\).
	\end{lemma}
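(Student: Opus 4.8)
The plan is to prove Lemma~\ref{lemma:P'Q'} as the mirror image of Lemma~\ref{lemma:PQ}, with the roles of the two factors interchanged: in Lemma~\ref{lemma:PQ} the factor $Q$ was a ``delta function'' (nonzero only in one degenerate case) while $P$ carried the quadratic weight $\binom{a}{2}$; here it is $P'$ that is a delta function and $Q'$ that carries the weight $\binom{b}{2}$. Throughout I write $U' = S \cap U$ and set $m = |S| - |U'|$ and $n = r - |S| - |U| + |U'|$; note $m \ge 0$ since $U' \subseteq S$, and $n \ge 0$ since $n = |\bar S \setminus U| = |\bar S| - |U \setminus S|$ with $U \setminus S \subseteq \bar S$.

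First I would evaluate $P'$ by reindexing its defining sum by $j = a - |U'|$ (the summand vanishes outside $0 \le j \le m$), which gives
\[
P' = (-1)^{|U'|}\sum_{j=0}^{m}(-1)^{j}\binom{m}{j} = (-1)^{|U'|}(1-1)^{m},
\]
so $P' = 0$ for $m \ge 1$ and $P' = (-1)^{|U'|}$ for $m = 0$; hence $P' = 0$ unless $|S| = |U'|$, i.e.\ unless $S \subseteq U$. Next I would evaluate $Q'$: since $b$ ranges over $0 \le b \le n$ with no offset, the absorption identity $\binom{b}{2}\binom{n}{b} = \binom{n}{2}\binom{n-2}{b-2}$ (a special case of~(\ref{eq1})) applies cleanly, and reindexing by $c = b - 2$ gives
\[
Q' = \binom{n}{2}\sum_{c \ge 0}(-1)^{c}\binom{n-2}{c} = \binom{n}{2}(1-1)^{n-2} \qquad (n \ge 2),
\]
which is $0$ for $n \ge 3$, equals $1$ for $n = 2$, and is $0$ for $n \le 1$ because then the prefactor $\binom{n}{2}$ vanishes. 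So $Q' = 0$ whenever $n \ne 2$.

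Finally I would combine the two observations. If $P' = 0$ then $P'Q' = 0$ trivially. If $P' \ne 0$ then $|S| = |U'|$, whence $n = r - |S| - |U| + |U'| = r - |U|$; since the hypothesis actually in force (inherited from Proposition~\ref{prop:Dvecs}, and what Lemma~\ref{lemma:PQ} also uses) is $|U| < r - 2$, we get $n = r - |U| > 2$, so $Q' = 0$ and again $P'Q' = 0$. I do not expect a genuine obstacle: this is a routine dual of Lemma~\ref{lemma:PQ}. The only point requiring care is tracking which regime of the alternating binomial sum is in play --- in particular the exceptional value $n = 2$ for $Q'$ is exactly what the hypothesis $|U| < r-2$ exists to exclude, just as the exceptional values of $|S| - |U'|$ were excluded in the proof of Lemma~\ref{lemma:PQ}.
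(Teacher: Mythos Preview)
Your proof is correct and follows essentially the same approach as the paper: show $P'=0$ unless $|S|=|U'|$, show $Q'=0$ unless $r-|S|-|U|+|U'|=2$, then observe that when the first fails the substitution $|S|=|U'|$ turns the second condition into $|U|=r-2$, which is ruled out by the hypothesis $|U|<r-2$. Your version is in fact more careful than the paper's (you handle the boundary cases $n\le 1$ for $Q'$ explicitly), and you correctly note that the operative hypothesis is $|U|<r-2$ from Proposition~\ref{prop:Dvecs} rather than the ``$|U'|<r-2$'' appearing in the lemma statement, which is a typo.
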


	\begin{proof}
		If \(|S| \neq |U'|\) then \(P' = 0\)
		\begin{align*}
			Q'&=\sum_b (-1)^b \binom{b}{2}\binom{r-|S|-|U|+|U'|}{b}
            \\ &= \binom{r-|S|-|U|+|U'|}{2}\sum_b (-1)^b \binom{r-|S|-|U|+|U'|-2}{b-2}.
		\end{align*}
        If \(r-|S|-|U|+|U'|>2\) then \(Q' = 0\). Therefore, if \(|U| < r-2\) then \(P'Q' = 0 \).
	\end{proof}

\begin{proof}[Proof of Proposition \ref{prop:Dvecs}] As \(|U|<r-2\) by assumption, applying Lemmas \ref{lemma:PQ} and \ref{lemma:P'Q'} to (\ref{eqPQs}), we conclude that \((M_r {V})_S = 0\) for all \(S\).

\end{proof}
%
%
\begin{prop} \label{prop:x-y}
	Let \(\mathcal{E}_r\) be the set of all vectors of the form \(V = V^x -V^y\), where \(x,y \in [r]\) and \(V^z_T = T(z)\). Then all elements of \(\mathcal{E}_r\) are eigenvectors of \(M_r\) with eigenvalue 
  \(2^{r-3}(r-2)\).
	\end{prop}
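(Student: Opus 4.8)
The plan is to follow the ``template argument'' already set up for the $\mathcal{D}_r$ family: fix $x,y \in [r]$ with $x \neq y$, set $V = V^x - V^y$ where $V^z_T = T(z)$, and compute $(M_r V)_S$ directly for an arbitrary $S \subseteq [r]$, showing it equals $2^{r-3}(r-2)\,V_S$. Since $V_T = T(x) - T(y)$, the coordinate $V_T$ depends only on whether $T$ contains $x$, contains $y$, or contains neither or both (in which case $V_T = 0$). So I would split $(M_r V)_S = \sum_T M_r(S,T)(T(x)-T(y))$ into $\sum_{T \ni x} M_r(S,T) - \sum_{T \ni y} M_r(S,T)$, and within each sum further classify $T$ by the quantities $a = |S \cap T|$ and $b = |\bar S \cap \bar T|$, exactly as in the $\mathcal{D}_r$ computation. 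The counting factor replacing $\chi(S;a,b)\cdot V$ will be the number of $T$ with $x \in T$ (resp.\ $y \in T$) and the prescribed intersection sizes; this is again a product of two binomial coefficients, with one of the two upper indices decremented by $1$ (and possibly one lower index shifted) according to whether $x$, respectively $y$, lies in $S$ or in $\bar S$.

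The key steps, in order, are: (1) write $(M_r V)_S = A_x - A_y$ where $A_z = \sum_T M_r(S,T) T(z)$; (2) for each $z \in \{x,y\}$, expand $A_z = \sum_{a,b}\bigl[\binom a2 + \binom b2\bigr]\,\kappa_z(S;a,b)$ where $\kappa_z(S;a,b)$ counts $T \ni z$ with $|S\cap T| = a$, $|\bar S \cap \bar T| = b$, and note that this count is $0$ unless $|T| = r - |S| + a - b$; (3) evaluate $\kappa_z$ as an explicit product of two binomials, distinguishing the cases $z \in S$ and $z \notin S$ (these govern which of the two ``coordinates'' $z$ is forced into); (4) substitute and, as in the $P,Q,P',Q'$ bookkeeping for $\mathcal{D}_r$, collapse the resulting alternating-free binomial sums using the identities in the Appendix — here there is no sign $(-1)^{|T|}$, so the relevant identities will be Vandermonde-type convolutions and the ``$\binom a2 \binom na = \binom n2 \binom{n-2}{a-2}$''-type absorption identity (\ref{eq1}), together with $\sum_a \binom na = 2^n$ and $\sum_a \binom a2 \binom na = \binom n2 2^{n-2}$ (\ref{eq5})-style evaluations; (5) simplify $A_x$ and $A_y$ to closed forms linear in $S(x)$ and $S(y)$ respectively, and verify $A_x - A_y = 2^{r-3}(r-2)(S(x) - S(y)) = 2^{r-3}(r-2) V_S$.

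The main obstacle I anticipate is step (4): keeping the case analysis organized. Unlike the $\mathcal{D}_r$ case, where the alternating sign forced most terms to vanish and left essentially one surviving term, here the sums are genuine weighted sums over all $T$, so I expect $A_x$ to come out as an affine function $\alpha + \beta S(x)$ with both $\alpha,\beta \neq 0$; the constant part $\alpha$ must then cancel when I subtract $A_y$ (which has the same $\alpha$ by symmetry in $x \leftrightarrow y$), leaving only the $S$-dependent part. Verifying that $\beta = 2^{r-3}(r-2)$ — i.e.\ that the coefficient of $S(x)$ in $A_x$ is exactly $2^{r-3}(r-2)$ — is the crux, and it will require carefully combining the $z \in S$ and $z \notin S$ subcases and applying the binomial identities of the Appendix to the sums $\sum_{a,b}\bigl[\binom a2+\binom b2\bigr]$ against the shifted binomial products. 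Once $\beta$ is pinned down, the cancellation of $\alpha$ is automatic from the $x \leftrightarrow y$ symmetry of the construction, and the proposition follows.
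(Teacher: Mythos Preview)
Your proposal is correct and follows essentially the same approach as the paper: the paper also computes $(M_rV^x)_S$ (your $A_x$) by first evaluating the count of $T\ni x$ with prescribed $a=|S\cap T|$, $b=|\bar S\cap\bar T|$ as a product of two binomials depending on whether $x\in S$ or $x\notin S$, then collapses the resulting sums via identities (\ref{eq1}), (\ref{eq2}), and (\ref{eq5}) to closed forms in the two cases $S(x)=1$ and $S(x)=0$, and finally subtracts the analogous expression for $y$ in a short case analysis on $(S(x),S(y))$. Your framing of $A_x$ as $\alpha+\beta\,S(x)$ with the $|S|$-dependent constant $\alpha$ cancelling by $x\leftrightarrow y$ symmetry is exactly what the paper's case analysis amounts to.
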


We begin with a lemma.

		\begin{lemma} \label{x-y:chi dot Vx}
		\[
		\chi (S;a,b) \cdot {V}^x =\binom{|S|- S(x) }{a- S(x)}\binom{r-|S|-1+S(x)}{b}
		\]
	\end{lemma}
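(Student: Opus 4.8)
The plan is to compute $\chi(S;a,b)\cdot V^x$ directly as a cardinality, in the same spirit as the proof of Lemma~\ref{lemma:firststepforD}. Unwinding the definitions, $\chi(S;a,b)\cdot V^x = \sum_{T} V^x_T$, where $T$ ranges over the subsets of $[r]$ with $|S\cap T| = a$ and $|\overline{S}\cap\overline{T}| = b$; since $V^x_T = T(x)$ is the indicator of $x\in T$, this sum counts exactly those admissible $T$ that contain $x$.

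I would then split each such $T$ into the independently choosable pieces $T\cap S$ and $T\cap\overline{S}$. The constraint $|S\cap T| = a$ fixes $|T\cap S| = a$, and the constraint $|\overline{S}\cap\overline{T}| = b$ is equivalent to $\overline{T}\cap\overline{S}$ being a $b$-element subset of $\overline{S}$ (equivalently $|T\cap\overline{S}| = (r-|S|)-b$); in passing one checks these force $|T| = r-|S|+a-b$, consistent with the support of $\chi(S;a,b)$. Now impose $x\in T$ and split on whether $x\in S$. If $x\in S$, then $x$ is forced to be one of the $a$ elements of $T\cap S$, leaving $\binom{|S|-1}{a-1}$ choices for that piece, while $\overline{T}\cap\overline{S}$ ranges over all $b$-subsets of $\overline{S}$, giving $\binom{r-|S|}{b}$. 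If $x\notin S$, then $T\cap S$ is an arbitrary $a$-subset of $S$, giving $\binom{|S|}{a}$, while $x\in\overline{S}$ must lie in $T$ hence outside $\overline{T}$, so $\overline{T}\cap\overline{S}$ is a $b$-subset of $\overline{S}\setminus\{x\}$, giving $\binom{r-|S|-1}{b}$. Using $S(x)\in\{0,1\}$ these two cases merge into $\binom{|S|-S(x)}{a-S(x)}\binom{r-|S|-1+S(x)}{b}$, which is the claimed formula.

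There is no real obstacle here; the only care needed is the bookkeeping of which of the two element pools — the $S$-side or the $\overline{S}$-side — loses the vertex $x$ (the $S$-side when $x\in S$, the $\overline{S}$-side when $x\notin S$), together with the harmless translation between ``choosing $T\cap\overline{S}$'' and ``choosing its complement $\overline{T}\cap\overline{S}$''. Degenerate situations such as $a=0$ with $x\in S$ need no separate treatment, since both sides of the identity then vanish.
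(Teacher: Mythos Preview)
Your proposal is correct and follows essentially the same approach as the paper: both recognize $\chi(S;a,b)\cdot V^x$ as the count of admissible $T$ containing $x$, decompose $T$ into its $S$-part and $\overline{S}$-part, and observe that the location of $x$ removes one element from exactly one of the two pools. The only cosmetic difference is that you split explicitly into the cases $x\in S$ and $x\notin S$ before merging via $S(x)$, whereas the paper writes the unified formula $\binom{|S\setminus\{x\}|}{a-|S\cap\{x\}|}\binom{|\overline{S}\setminus\{x\}|}{b}$ directly.
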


	\begin{proof} Clearly,
		\[
		\chi(S;a,b) \cdot {V}^x = \sum_{\substack{T \ni x \\ S \thicksim T }} 1
		\]
		Therefore \(\chi(S;a,b) \cdot {V}^x\) is given by the number of ways to chose \(T\) such that \(|S \cap T| = a\), \(|\bar{S} \cap \bar{T}|=b\), and \(x \in T\). Let \(T_1 = (S\ \backslash \{x\}) \cap T\) and \(T_2 = (\bar{S} \backslash \{x\}) \cap T \). Then the number of ways to chose \(T_1\) is given by \(\binom{|S \backslash \{x\}|}{a-|S \cap \{x\}|}\) and the number of way to chose \(T_2\) is given by \(\binom{|\bar{S} \backslash \{x\}|}{b}\). Therefore
		\begin{align*}
		\chi(S;a,b) \cdot {V}^x &= \binom{|S \backslash \{x\}|}{a-|S \cap \{x\}|} 			\binom{|\bar{S} \backslash \{x\}|}{b} \\
        &= \binom{|S|-S(x)}{a - S(x)}\binom{r-|S| - 1 + S(S)}{b}.
		\end{align*}
	\end{proof}
	\begin{proof}[Proof of Proposition \ref{prop:x-y}]
	\[
	(M_r{V}^x)_S=\sum_{a, b} \left [ \binom{a}{2}+\binom{b}{2} \right ] \binom{|S|- S(x) }{a- S(x)}\binom{r-|S|-(1-S(x))}{b}
	\]
	If \(S(x)=1\), then
	\begin{align*}
		(M_r {V}^x)_S &= \sum_a \binom{a}{2}\binom{|S|-1}{a-1} \sum_b \binom{r-|S|}{b}\\ 
    	&+ \sum_a \binom{|S|-1}{a-1} \sum_b \binom{b}{2}\binom{r-|S|}{b}.
	\end{align*}
	Thus, by (\ref{eq1})
	\begin{align*}
		(M_r {V}^x)_S &= 2^{r-|S|}\sum_a \left [\binom{|S|-1}{2}\binom{|S|-3}{a-3}+(a-1)\binom{|S|-1}{a-1}\right ]\\
        &+2^{|S|-1}\sum_b \binom{r-|S|}{2}\binom{r-|S|-2}{b-2}.
	\end{align*}
	By (\ref{eq2})
	\[
	(M_r{V}^x)_S=2^{r-4}[(|S|-1)(|S|-2) + 4(|S|-1)+(r-|S|)(r-|S|-1)].
	\]
	If \(S(x)=0\), then
	\[
	(M_r{V}^x)_S = \sum_a \binom{a}{2}\binom{|S|}{a} \sum_b \binom{r-|S|-1}{2}+\sum_a \binom{|S|}{a} \sum_b \binom{b}{2}\binom{r-|S|-1}{b}.
    \]
	By (\ref{eq1})
	\begin{align*}
		(M_r{V}^x)_S&= 2^{r-|S|-1} \sum_a\binom{|S|}{2}\binom{|S|-2}{a-2}+2^{|S|} \sum_b 			\binom{r-|S|-1}{2}\binom{r-|S|-3}{b-2}\\
		&= 2^{r-4}[|S|^2-|S|+(r-|S|-1)(r-|S|-2)]
	\end{align*}
	\begin{description}
		\item[Case \(S(x) = S(y)\):] Then clearly \({V}_S=0\) and \((M_r{V})_S = 0\).
		\item[Case \(S(x)=1,S(y)=0\):] then \({V}_S=1\) and
		\[(M_r {V})_S=(M_r {V}^x)_S-(M_r {V}^y)_S= 2^{r-3}(r-2).\]
		\item[Case 3.]
		\(S(x)=0,S(y)=1\) then \({V}_S=-1\) and
		\[(M_r {V})_S=(M_r {V}^x)_S-(M_r {V}^y)_S= -2^{r-3}(r-2).\]
	\end{description}
	Therefore \((M_r {V})_S = 2^{r-3}(r-2) {V}_T\) for all \(S=T\).
\end{proof}

%
%
\begin{prop} \label{prop:(w-x)(y-z)}
	Let \(\mathcal{F}_r\) be a set of vectors of the form \(V_T = (T(w)-T(x))(T(y)-T(z))\), where \(\{w,x,y,z\} \subset [r]\). Then all elements of \(\mathcal{F}_r\) are eigenvectors of \(M_r\) with eigenvalue \(2^{r-3}\).
\end{prop}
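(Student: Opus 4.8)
The plan is to follow the same template used for \(\mathcal{D}_r\) and \(\mathcal{E}_r\): expand \(V\) as a signed sum of ``pair indicators'', evaluate \(M_r\) on each one, and recombine. Expanding the product,
\[
V_T=(T(w)-T(x))(T(y)-T(z))=V^{wy}_T-V^{wz}_T-V^{xy}_T+V^{xz}_T,
\]
where for a two-element subset \(\{p,q\}\subseteq[r]\) I write \(V^{pq}_T=T(p)T(q)\), the indicator of \(\{p,q\}\subseteq T\). By linearity of \(M_r\) it suffices to compute \((M_rV^{pq})_S\) for each such pair and then form the alternating combination.

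First I would compute \(\chi(S;a,b)\cdot V^{pq}\), the number of \(T\) with \(|S\cap T|=a\), \(|\bar S\cap\bar T|=b\), and \(\{p,q\}\subseteq T\). Writing \(k=|\{p,q\}\cap S|\in\{0,1,2\}\) and splitting \(T\) into \(T\cap(S\setminus\{p,q\})\) and \(T\cap(\bar S\setminus\{p,q\})\) exactly as in Lemma~\ref{x-y:chi dot Vx} (this is also the \(U=\{p,q\}\) specialization of the count in Lemma~\ref{lemma:firststepforD}, minus the \((-1)^{|T|}\) weight), one gets
\[
\chi(S;a,b)\cdot V^{pq}=\binom{|S|-k}{a-k}\binom{r-|S|-2+k}{b},
\]
a quantity depending on the pair only through \(k\) and \(|S|\). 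Feeding this into the template and evaluating the two resulting sums with the Appendix identities — using \(\binom{a}{2}=\binom{a-k}{2}+k(a-k)+\binom{k}{2}\), the absorption identities, and \(\sum_j\binom{n}{j}=2^n\) — and noting that the two upper indices \(|S|-k\) and \(r-|S|-2+k\) sum to \(r-2\) so that all powers of two collapse, I expect to obtain, with \(s=|S|\),
\[
(M_rV^{pq})_S=2^{r-4}\!\left[\binom{s-k}{2}+\binom{r-s-2+k}{2}\right]+2^{r-3}\,k(s-k)+2^{r-2}\binom{k}{2}=:f(k).
\]
The crucial structural point is that \(f\) is a polynomial of degree at most \(2\) in \(k\): the quadratic contributions of \(k(s-k)\) and \(\binom{k}{2}\) cancel, while the two binomial terms contribute exactly \(2^{r-4}\) to the coefficient of \(k^2\). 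Equally important, \(f\) depends on the pair \(\{p,q\}\) only through \(k=|\{p,q\}\cap S|=S(p)+S(q)\).

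Finally I would recombine: \((M_rV)_S=f(k_{wy})-f(k_{wz})-f(k_{xy})+f(k_{xz})\) with \(k_{pq}=S(p)+S(q)\). If \(S(w)=S(x)\) then \(k_{wy}=k_{xy}\) and \(k_{wz}=k_{xz}\), so the expression vanishes, matching \(V_S=0\); the case \(S(y)=S(z)\) is analogous. In the remaining case \(\{S(w),S(x)\}=\{S(y),S(z)\}=\{0,1\}\), the four numbers \(k_{wy},k_{wz},k_{xy},k_{xz}\) are \(0,1,1,2\) in some order, and a check of the four resulting sign patterns gives \((M_rV)_S=V_S\bigl(f(2)-2f(1)+f(0)\bigr)\) with \(V_S=\pm1\). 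Since \(f\) has degree at most \(2\) with leading coefficient \(2^{r-4}\), its second difference is \(f(2)-2f(1)+f(0)=2\cdot2^{r-4}=2^{r-3}\), so \((M_rV)_S=2^{r-3}V_S\) for every \(S\), as claimed.

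The main obstacle is the bookkeeping in the middle step — correctly expanding \(\binom{a}{2}\binom{|S|-k}{a-k}\) and \(\binom{b}{2}\binom{r-|S|-2+k}{b}\) and checking that the degenerate cases \(|S|-k\le1\) or \(r-|S|-2+k\le1\) cause no trouble (they do not, since there the offending coefficients \(\binom{s-k}{2}\) or \(\binom{r-s-2+k}{2}\) vanish anyway). Once \(f(k)\) is identified as a quadratic in \(k\), extracting the eigenvalue from its second difference makes the final recombination essentially automatic.
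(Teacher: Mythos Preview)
Your argument is correct and is in fact a tidier route than the paper's. The paper expands \(V\) using the four ``full'' indicators \(\mu(p,q)\) that force \(p,q\in T\) \emph{and} the other two of \(\{w,x,y,z\}\) out of \(T\); it then performs a case analysis on \(f=S(w)+S(x)+S(y)+S(z)\in\{0,1,2,3,4\}\), with the nontrivial case \(f=2\) handled by a direct (and somewhat laborious) evaluation of three sums \(P_1,P_2,P_3\). Your decomposition into the pair indicators \(V^{pq}=T(p)T(q)\) is strictly simpler: the count depends on the pair only through \(k=S(p)+S(q)\), so \((M_rV^{pq})_S\) collapses via identity~(\ref{eq5}) to a single closed form \(f(k)\). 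Recognizing \(f\) as a quadratic in \(k\) with leading coefficient \(2^{r-4}\) and reading off the eigenvalue as the second difference \(f(2)-2f(1)+f(0)=2^{r-3}\) replaces the paper's case-by-case computation with a one-line structural observation. The trade-off is that the paper's \(\mu\) makes the vanishing in the \(f\in\{0,1,3,4\}\) cases transparent by symmetry of the summands, whereas you get it from \(k_{wy}=k_{xy}\) when \(S(w)=S(x)\); both are equally easy, but your approach avoids ever writing out the long \(P_i\) expressions.
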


\begin{proof} 
	Define the quantity \(\mu(w,y)\) to the number of ways to choose \(T\) such that \(|T| = r - |S| + a - b\); \(w\), \(y \in T\) and \(x\), \(z \not\in T\); note that this depends on \(S\), \(a\), and \(b\), although these variables are suppressed in the notation for ease of reading. We have
	\[
		\chi(S;a,b) \cdot {V}_T=\mu (w,y) - \mu (w,z) - 		\mu(x,y) + \mu(x,z),
	\]
	where
	\[
		\mu (w,y) =\binom{|S|-f}{a-S(w)-S(y)}\binom{r-|S|+f-4}{b-f+S(w)+S(y)}
	\]
	and \(f=S(w)+S(x)+S(y)+S(z).\) Therefore 
	\[
		(M_r {V})_S = \sum_{a,b} \left ( \binom{a}{2}+\binom{b}{2} \right ) \left ( \mu (w,y) - \mu (w,z) - \mu(x,y) + \mu(x,z) \right )
	\]
	\begin{description}
		\item[Case \(f = 0\) or \(f=4\):]
			Clearly in this case \((M_r {V})_S = 0\).

		\item[Case \(f=1\):] Then
			\begin{align*}
				&(M_r{V})_S =\\
				& \sum_{a,b}\left [\binom{a}{2}+\binom{b}{2}\right ]\left [\binom{|S|-1}{a-1 }\binom{r-|S|-3}{b}+\binom{|S|-1}{a}\binom{r-|S|-3}{b-1} \right .\\
				&\left .\qquad-\binom{|S|-1}{a-1}\binom{r-|S|-3}{b}-\binom{|S|-1}{a}\binom{r-|S|-3}{b-1}\right ]=0.
			\end{align*}

		\item[Case \(f=3\):] Then
			\begin{align*}
				&(M_r{V})_T=\\
				&\sum_{a,b}\left [\binom{a}{2}+\binom{b}{2}\right ]\left [\binom{|S|-3}{a-2}\binom{r-|S|-1}{b-1}+\binom{|S|-3}{a-1}\binom{r-|S|-1}{b-2} \right .\\
				&\left . \qquad -\binom{|S|-3}{a-1}\binom{r-|S|-1}{b-2}-\binom{|S|-3}{a-2}\binom{r-|S|-1}{b-1}\right ]=0.
			\end{align*}
		\item[Case \(S(w)=S(y)=1,S(x)=S(z)=0\) or \(S(x)=S(z)=1,S(w)=S(y)=0\):] Then
			\begin{align*}
				&(M_r{V})_S=\\
				&\sum_{a,b}\left [\binom{a}{2} + \binom{b}{2} \right ] \left [ \binom{|S|-2}{a-2} \binom{r|S|-2}{b} + \binom{|S|-2}{a} \binom{r-|S|-2}{b-2} \right . \\
				&\left . \qquad -\binom{|S|-2}{a-1}\binom{r-|S|-2}{b-1}-\binom{|S|-2}{a-1}\binom{r-|S|-2}{b-1}\right ]\\
				&=P_1+P_2-2P_3
			\end{align*}
			where
			\[
            	P_1=\sum_a \binom{a}{2}\binom{|S|-2}{a-2}\sum_b\binom{r-|S|-2}{b}+\sum_a \binom{|S|-2}{a-2}\sum_b \binom{b}{2}\binom{r-|S|-2}{b}
            \]
            \[
            	P_2=\sum_a \binom{a}{2}\binom{|S|-2}{a} \sum_b \binom{r-|S|-2}{b-2} + \sum_a \binom{|S|-2}{a} \sum_b \binom{b}{2} \binom{r-|S|-2}{b-2}
            \]
			\[
            	P_3=\sum_a \binom{a}{2} \binom{|S|-2}{a-1} \sum_b \binom{r-|S|-2}{b-1} + \sum_a \binom{|S|-2}{a-1} \sum_b \binom{b}{2} \binom{r-|S|-2}{b-1}.
            \]
			By (\ref{eq1})
			\begin{align*}
				P_1 &=2^{r-|S|-2}\sum_a \left [ \binom{|S|-2}{2}\binom{|S|-4}{a-4}+ 2(a-2)\binom{|S|-2}{a-2}+\binom{|S|-2}{a-2}\right ]\\
                & + 2^{|S|-2}\sum_b\binom{r-|S|-2}{2}\binom{r-|S|-4}{b-2}\\
				&=2^{r-|S|-2}\left [\binom{s-2}{2}2^{|S|-4}+(|S|-2)2^{|S|-2}+2^{|S|-2}\right ]+\binom{r-|S|-2}{2}2^{r-4}\\
                &=2^{r-7}[(|S|-2)(|S|-3)+8|S|-8+(r-|S|-2)(r-|S|-3)].
			\end{align*}
			Similarly,
			\[
            	P_2=2^{r-|S|-4}\sum_a \binom{a}{2}\binom{|S|-2}{a}+r^{|S|-2}\sum_b \binom{b}{2}
\binom{r-|S|-2}{b-2}.
			\]
			By (\ref{eq5})
			\[
            	P_2=2^{r-7}\left [(|S|-2)(|S|-3)+(r-|S|-2)(r-|S|-3)+8(r-|S|-2)+8 \right ],
            \]
			and
			\[
            	P_3=2^{r-|S|-2}\sum_a \binom{a}{2}\binom{|S|-2}{a-1}+2^{|S|-2}\sum_b \binom{b}{2}\binom{r-|S|-2}{b-1}.
            \]
			Therefore
            \[
            	P_1+P_2-2P_3=2^{r-3}.
            \]
			\item[Case \(S(x)=S(y)=1\), \(S(w)=S(z)=0\) or \(S(w)=S(z)=0\), \(S(x)=S(y)=1\):] This case is identical to the above case except that both \((M_r{V})_S\) and \({V}_S\) are negated.
		\end{description}
		Therefore \((M_r{V})_S = 2^{r-3}{V}_T\) for all \(S=T\).
\end{proof}

%
%

Proposition \ref{prop:(r-2s)(x-y)} describes a second family of eigenvectors of \(M_r\) with eigenvalue \(2^{r-3}\). In Proposition \ref{prop:size 2^(r-3)}, we will show that this family of vectors is distinct from those described in Proposition \ref{prop:(w-x)(y-z)}.
\begin{prop} \label{prop:(r-2s)(x-y)}
	Let \(\mathcal{G}_r\) be a set of vectors indexed by \(T \subset [r]\) such that \(V=V^x-V^y\), where \(\{x,y\} \subset [r]\) and \(V^x_T=(r-2|T|)T(x)\). Then all elements of \(\mathcal{G}_r\) are eigenvectors of \(M_r\) with eigenvalue \(2^{r-3}\).
\end{prop}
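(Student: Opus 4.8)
\section*{Proof proposal for Proposition~\ref{prop:(r-2s)(x-y)}}

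The plan is to follow the template of the proof of Proposition~\ref{prop:x-y}. First I would establish the analogue of Lemma~\ref{x-y:chi dot Vx}: for fixed $S$, $a$, $b$, every $T$ with $|S \cap T| = a$ and $|\bar{S} \cap \bar{T}| = b$ has the same cardinality $|T| = r - |S| + a - b$, so the weight $r - 2|T|$ is constant over the set of such $T$ and
\[
\chi(S;a,b) \cdot V^x = (2|S| - r - 2a + 2b)\binom{|S| - S(x)}{a - S(x)}\binom{r - |S| - 1 + S(x)}{b},
\]
where the product of binomials is exactly the count appearing in Lemma~\ref{x-y:chi dot Vx}. Feeding this into the template of the preceding subsection gives
\[
(M_r V^x)_S = \sum_{a,b}\left[\binom{a}{2} + \binom{b}{2}\right](2|S| - r - 2a + 2b)\binom{|S|-S(x)}{a-S(x)}\binom{r-|S|-1+S(x)}{b}.
\]

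Next I would evaluate this double sum in the two cases $S(x)=1$ (binomial product $\binom{|S|-1}{a-1}\binom{r-|S|}{b}$) and $S(x)=0$ (binomial product $\binom{|S|}{a}\binom{r-|S|-1}{b}$). Expanding $(2|S|-r-2a+2b)\left[\binom{a}{2}+\binom{b}{2}\right]$ into the six monomials $\ast\binom{a}{2}$ and $\ast\binom{b}{2}$ with $\ast \in \{2|S|-r,\,-2a,\,2b\}$, each resulting summand becomes a product of a sum over $a$ and a sum over $b$, and each such factor is a standard sum $\sum_k k^j\binom{n}{k-c}$ evaluable via the Appendix identities together with $a\binom{a}{2} = 3\binom{a}{3}+2\binom{a}{2}$. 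Writing $s = |S|$ and $m = r-s$, this yields a closed form $(M_r V^x)_S = 2^{r-4}F(s, S(x))$ in which the $(2|S|-r)$-weighted part is precisely $(2|S|-r)$ times the value of $(M_r V^x)_S$ computed in Proposition~\ref{prop:x-y} for the family $\mathcal{E}_r$, and the remaining part is a polynomial in $s$ and $m$.

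Finally I would assemble $(M_r V)_S = (M_r V^x)_S - (M_r V^y)_S$. Because the closed form depends on $x$ only through $S(x)$, the case $S(x)=S(y)$ gives $(M_r V)_S = 0 = 2^{r-3}V_S$ (here $V_S = (r-2|S|)(S(x)-S(y)) = 0$), and the case $S(x)=0$, $S(y)=1$ is the negative of the case $S(x)=1$, $S(y)=0$; so only the last case needs checking, where $V_S = r-2|S|$ and one must verify that the difference of the two closed forms equals $2^{r-3}(r-2|S|)$. Using Proposition~\ref{prop:x-y}, the $(2|S|-r)$-weighted parts contribute $(2|S|-r)\cdot 2^{r-3}(r-2)$ to the difference, so the claim reduces to the single polynomial identity $N_1 - N_0 = 2(m-s)(r-1)$ relating the polynomial remainders $N_1$ (from $S(x)=1$) and $N_0$ (from $S(x)=0$); expanding both sides shows each equals $2(m^2-s^2)+2(s-m)$. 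I expect the main obstacle to be purely the bookkeeping in the middle step: each of the two cases produces roughly eight binomial-sum pieces with index shifts coming from the $a-S(x)$ and $b$ offsets, and one must confirm that the polynomial formulas remain valid at the small-$s$ or small-$m$ boundary, where individual binomials vanish. Borrowing the $(2|S|-r)$-weighted part from Proposition~\ref{prop:x-y} shortens this by reducing the genuinely new work to evaluating the single auxiliary sum $\sum_{a,b}(a-b)\left[\binom{a}{2}+\binom{b}{2}\right]\binom{|S|-S(x)}{a-S(x)}\binom{r-|S|-1+S(x)}{b}$, and also serves as a sanity check since the $\mathcal{E}_r$ eigenvalue $2^{r-3}(r-2)$ already appears there.
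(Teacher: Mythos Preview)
Your proposal is correct and follows essentially the same route as the paper: compute $\chi(S;a,b)\cdot V^x$ via Lemma~\ref{x-y:chi dot Vx}, expand $r-2|T|=2|S|-r-2a+2b$ against $\binom{a}{2}+\binom{b}{2}$ into six summands (the paper's $P_1,\ldots,P_6$), evaluate each with the Appendix identities, and subtract. The paper keeps $S(x)$ as a symbolic parameter throughout rather than splitting into the cases $S(x)\in\{0,1\}$, and it does not isolate the $(2|S|-r)$-weighted part as a reuse of Proposition~\ref{prop:x-y}; your shortcut there is valid and trims the bookkeeping, but the underlying computation is the same.
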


	\begin{proof} Clearly,
		\[
			\chi(S;a,b) \cdot {V}^x = \sum_{\substack{T\\ S \thicksim T\\ x \in T}} (r-2|T|).
		\]
		It is straightforward to see that \(S, T \subset [r]\), \(|S \cap T| = a\) and \(|\bar{S} \cap \bar{T}| = b\) implies that \(|T| = r - |S| -a + b\). Therefore, by Lemma \ref{x-y:chi dot Vx},
		\begin{align*}
			\chi(S;a,b) \cdot {V}^x &= (r-2|T|)\sum_{\substack{T \\ S \thicksim T \\ x \in T}} 1 \\
&= \binom{|S|-S(x)}{a-S(x)}\binom{r-|S|-(1-S(x))}{b}(r-2|T|).
		\end{align*}
Then we have
	\begin{align*}
		&(M_r{V}^x)_S=\\
		&=\sum_{a,b}\left [\binom{a}{2}+\binom{b}{2}\right ]\binom{|S|-S(x)}{a-S(x)}\binom{r-|S|-(1-S(x))}{b}(r-2|T|)\\
		&=\sum_{a,b}\left [\binom{a}{2}+\binom{b}{2}\right ]\binom{|S|-S(x)}{a-S(x)}\binom{r-|S|-(1-S(x))}{b}(2|S|-r+2b-2a)\\
		&=P_1(x)+P_2(x)+P_3(x)+P_4(x)+P_5(x)+P_6(x),
	\end{align*}
	where
	\begin{align*}
		P_1(x)&=(2|S|-r)\sum_a\binom{a}{2}\binom{|S|-S(x)}{a-S(x)}\sum_b\binom{r-|S|+S(x)-1}{b}\\
		P_2(x)&=(2|S|-r)\sum_a\binom{|S|-S(x)}{a-S(x)}\sum_b\binom{b}{2}\binom{r-|S|+S(x)-1}{b}\\
		P_3(x)&=-\sum_a 2a\binom{a}{2}\binom{|S|-S(x)}{a-S(x)}\sum_b\binom{r-|S|+S(x)-1}{b}\\
		P_4(x)&=-\sum_a 2a\binom{|S|-S(x)}{a-S(x)}\sum_b\binom{b}{2}\binom{r-|S|+S(x)-1}{b}\\
		P_5(x)&=\sum_a\binom{a}{2}\binom{|S|-S(x)}{a-S(x)}\sum_b 2b\binom{r-|S|+S(x)-1}{b}\\
		P_6(x)&=\sum_a\binom{|S|-S(x)}{a-S(x)}\sum_b 2b\binom{b}{2}\binom{r-|S|+S(x)-1}{b}.
	\end{align*}
	By (\ref{eq1}),
	\begin{align*}
		P_3(x)=&2^{r-|S|+S(x)}\sum_a \left [a\binom{|S|-S(x)}{2}\binom{|S|-S(x)-2}{a-S(x)-2} \right .\\
		&\left . +aS(x)(a-S(x)) \binom{|S|-S(x)}{a-S(x)}\right ]\\
		P_3(x)=&2^{r-|S|+S(x)}\sum_a \left \{\binom{|S|-S(x)}{2}\left [ (a-S(x)-2)\binom{|S|-S(x)-2}{a-S(x)-2} \right . \right .\\
		&\left . +(S(x)+2)\binom{|S|-S(x)-2}{a-S(x)-2}\right ]+S(x)(a-S(x))^2 \binom{|S|-S(x)}{a-S(x)} \\
		& \left . + S(x)(a-S(x))\binom{|S|-S(x)}{a-S(x)} \right \}
	\end{align*}
	By (\ref{eq2}) and (\ref{eq4}),
	\begin{align*}
		P_3(x)=&-2^{r-4} \left [(|S|-S(x))(|S|-S(x)-1)(|S|+S(x)+2) \right .\\ 		& \left . +4(S(x))((|S|-S(x))+(|S|-S(x))^2)+8S(x)(|S|+S(x)) \right ].
	\end{align*}
	By similar algebra using (\ref{eq1}), (\ref{eq2}), (\ref{eq4}), and (\ref{eq5}), these terms simplify to the following.
	\begin{align*}
		P_1(x)=&2^{r-4}(2|S|-r)((|S|-S(x))(|S|-S(x)-1+4S(x)(|S|-S(x))\\
		P_2(x)=&2^{r-4}(2|S|-r)(r-|S|+S(x)-1)(r-|S|+S(x)-2)\\
		P_4(x)=&-2^{r-4}(r-|S|+S(x)-1)(r-|S|+S(x)-2)(|S|+S(x))\\
		P_5(x)=&2^{r-4}\left [ (|S|-S(x))(|S|-S(x)-1) \right .\\ 
        &\left . \qquad +4(S(x))(|S|-S(x)) \right ](r-|S|+S(x)-1)\\
		P_6(x)=&2^{r-4}(r-|S|+S(x)-1)(r-|S|+S(x)-2)(r-|S|+S(x)+1).
	\end{align*}
	Therefore
	\begin{align*}
		\frac{(M_r{V}^x)_S}{2^{r-4}} &= \left [ 6(S(x))^2+2S(x)r+r^2-4S(x)|S|-2|S|^2-6S(x)-3r+6|S|+2 \right ] \\
		&  = \left [ 2S(x)r+r^2-4S(x)|S|-2|S|^2-3r+6|S|+2 \right ],
	\end{align*}
	so
	\[
		(M_r V)_S = 2^{r-3}(S(x)-S(y))(r-2|S|).
	\]
	We may conclude that \((M_r V)_S = 2^{r-3} V_T\) for all \(S=T\).
\end{proof}


\section{Dimensions of the Eigenspaces}

\begin{prop} \label{prop:size 0}
	The dimension of the eigenspace associated with eigenvalue \(0\) is at least \(2^r- \binom{r+1}{2}\).
\end{prop}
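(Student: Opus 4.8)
The plan is to exhibit $2^{r}-\binom{r+1}{2}$ linearly independent vectors in the kernel of $M_r$, using the families already shown to consist of $0$-eigenvectors: the family $\mathcal{D}_r$ of Proposition~\ref{prop:Dvecs} and the singleton $\mathcal{A}_r$. The family $\mathcal{D}_r$ contains one vector $V^{(U)}$ for each $U\subseteq[r]$ with $|U|<r-2$, where $V^{(U)}_T=(-1)^{|T|}$ if $U\subseteq T$ and $V^{(U)}_T=0$ otherwise. The number of such $U$ is $\sum_{j=0}^{r-3}\binom{r}{j}=2^{r}-\binom{r}{r-2}-\binom{r}{r-1}-\binom{r}{r}=2^{r}-\binom{r+1}{2}-1$, so $\mathcal{D}_r$ together with the one vector of $\mathcal{A}_r$ furnishes exactly $2^{r}-\binom{r+1}{2}$ vectors annihilated by $M_r$. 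It remains only to prove these vectors are linearly independent.

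For the independence argument, let $D$ be the diagonal involution on $\mathbb{R}^{\mathcal{P}([r])}$ with $D(T,T)=(-1)^{|T|}$, and for $U\subseteq[r]$ let $E^{(U)}$ be the up-set indicator $E^{(U)}_T=1$ if $U\subseteq T$ and $0$ otherwise. Since the zeta matrix of the Boolean lattice is unitriangular when subsets are listed by increasing cardinality, $\{E^{(U)}:U\subseteq[r]\}$ is a basis of $\mathbb{R}^{\mathcal{P}([r])}$. Because $D$ is invertible, it suffices to check that $D$ carries our $2^{r}-\binom{r+1}{2}$ vectors to a linearly independent set. By construction $D V^{(U)}=E^{(U)}$, so the images of the $\mathcal{D}_r$-vectors are precisely the basis vectors $\{E^{(U)}:|U|\le r-3\}$.

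Thus the whole argument reduces to showing that the image $g:=D\,\Phi(V)$ of the $\mathcal{A}_r$-vector lies in $\spn\{E^{(U)}:|U|\ge r-1\}$, which intersects $\spn\{E^{(U)}:|U|\le r-3\}$ trivially. Now $g_T=(-1)^{|T|}(\Phi(V))_T$ vanishes unless $|T|\ge r-1$, equals $(-1)^{r-1}$ when $|T|=r-1$, and equals $(-1)^{r}(2-r)$ when $T=[r]$. Expanding $g$ in the basis $\{E^{(U)}\}$ by M\"obius inversion on the Boolean lattice, the coefficient of $E^{(U)}$ equals $\sum_{T\subseteq U}(-1)^{|U|-|T|}g_T$, which is $0$ whenever $|U|\le r-2$ since then no $T\subseteq U$ has $|T|\ge r-1$; in fact one computes $g=2(-1)^{r}E^{([r])}+(-1)^{r-1}\sum_{i\in[r]}E^{([r]\setminus\{i\})}$. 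Hence $g$ is a combination of $E^{(U)}$ with $|U|\ge r-1$, the images form a linearly independent set, and the proposition follows.

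The one genuinely delicate point is this last step: in the standard coordinate basis the $\mathcal{A}_r$-vector and many $\mathcal{D}_r$-vectors have overlapping support (all are nonzero on the top two levels of the cube), so a naive comparison of supports does not separate the $\mathcal{A}_r$-vector from the span of $\mathcal{D}_r$; passing to the up-set basis via the sign twist $D$ and M\"obius inversion is what makes the independence transparent. The remaining ingredients --- the cardinality count for $\mathcal{D}_r$ and the unitriangularity of the zeta matrix --- are standard.
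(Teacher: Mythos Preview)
Your proof is correct and rests on the same idea as the paper's: the vectors $V^{(U)}$ are (up to the sign twist $D$) the columns of the zeta matrix of the Boolean lattice, which is unitriangular, and the $\mathcal{A}_r$-vector is supported only on the top two levels, so it cannot lie in the span of the $\mathcal{D}_r$-vectors. Your passage through the up-set basis $\{E^{(U)}\}$ and the explicit M\"obius expansion of $g=D\,\Phi(V)$ is a slightly more careful version of the paper's direct triangularity claim; in particular, it handles the placement of the $\mathcal{A}_r$-vector cleanly, whereas the paper's choice to slot it at the $[r-2]$ column yields a zero diagonal entry and would need a small repair.
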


\begin{proof}
	Recall that there are at least two families of vectors with eigenvalue 0, \(\mathcal{A}_r\) which consists solely of the vector of the form
   \[V_T = 
      \begin{cases}
         1 & |T|=r-1\\
         2-r & |T|=r\\
         0 & |T| \leq r-2.
      \end{cases}\]
	and \(\mathcal{D}_r\) which consists of vectors of the form
   \[V_T^S =
   \begin{cases}
   0 & S \not\subseteq T\\
   (-1)^{|T|} & S \subseteq T
   \end{cases}\]
	where \(S,T\subset [r]\). Let \(A\) be a square matrix indexed by subsets of \([r]\) such that \(S\) is before \(T\) in the indexing implies that \(|S|\leq |T|\) where \(S,T \subset [r]\). Let the column of \(A\) at index \(S\) be the element of \(\mathcal{D}_r\) associated with \(S\) if \(|S| < r-2\) (i.e., \(V^S\)), the single element of \(\mathcal{A}_r\) if \(S = [r-2]\) and the \(S\)-indexed column of the \(2^r \times 2^r\) identity matrix otherwise. Then, \(A\) is upper right triangular with no zero entries on the primary diagonal. This implies that \(\mathcal{A}_r \cup \mathcal{D}_r\) is linearly independent. Therefore there are at least 
	\[
    	|\mathcal{A}| + |\mathcal{D}| = 1+\sum_{i=0}^{r-3}\binom{r}{i}=2^r-\binom{r+1}{2}
    \]
	eigenvectors with eigenvalue 0.
\end{proof}


\begin{prop}
The dimension of the eigenspace of \(M_r\) associated with eigenvalue \(2^{r-3}(r-2)\) is at least \(r-1\).
\end{prop}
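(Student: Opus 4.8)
The plan is to recycle the family $\mathcal{E}_r$ from Proposition \ref{prop:x-y}: its elements are already known to be eigenvectors of $M_r$ with eigenvalue $2^{r-3}(r-2)$, so it suffices to extract $r-1$ linearly independent vectors from $\spn\,\mathcal{E}_r$. The natural choice is $W_i := V^i - V^{i+1}$ for $i \in \{1,\ldots,r-1\}$, where $V^z_T = T(z)$ as in Proposition \ref{prop:x-y}. Each $W_i$ lies in $\mathcal{E}_r$ (take $x = i$, $y = i+1$), hence is an eigenvector with the asserted eigenvalue.

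Next I would verify that $W_1,\ldots,W_{r-1}$ are linearly independent. Suppose $\sum_{i=1}^{r-1} c_i W_i = 0$, and evaluate at the singleton index $T = \{j\}$ for $j \in [r]$. Since $V^z_{\{j\}}$ equals $1$ if $z = j$ and $0$ otherwise, the $\{j\}$-coordinate of $\sum_i c_i W_i$ is $c_j - c_{j-1}$ under the convention $c_0 = c_r = 0$ (where $c_i$ is only defined for $1 \le i \le r-1$). Setting these coordinates to zero and reading them off in order $j = 1, 2, \ldots$ forces $c_1 = 0$, then $c_2 = c_1 = 0$, and inductively $c_i = 0$ for all $i$. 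Hence $\dim \spn\{W_1,\ldots,W_{r-1}\} = r-1$, and this subspace sits inside the eigenspace for $2^{r-3}(r-2)$, giving the claimed lower bound.

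I do not expect a genuine obstacle here; the statement is a short linear-algebra argument resting entirely on Proposition \ref{prop:x-y}. The only point requiring mild care is the bookkeeping in the singleton evaluation. An equivalent and perhaps cleaner phrasing would be: the vectors $V^1,\ldots,V^r$ are themselves linearly independent (read off coordinate $\{x\}$ to isolate the coefficient of $V^x$), and $\spn\{V^x - V^y : x,y \in [r]\}$ is precisely the codimension-$1$ subspace of $\spn\{V^1,\ldots,V^r\}$ consisting of combinations whose coefficients sum to zero, which has dimension $r-1$.
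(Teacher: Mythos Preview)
Your proposal is correct and follows essentially the same approach as the paper: both extract $r-1$ vectors from the family $\mathcal{E}_r$ of Proposition~\ref{prop:x-y} and verify linear independence. The paper chooses the ``star'' basis $\{V(1,j):j=2,\ldots,r\}$ and simply asserts linear independence, whereas you take the ``path'' basis $\{V^i-V^{i+1}:i=1,\ldots,r-1\}$ and actually justify independence via the singleton coordinates; the difference is cosmetic.
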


\begin{proof}
	Recall that there is a family of eigenvectors, \(\mathcal{E}_r\), with eigenvalue \(2^{r-3}(r-2)\). Let \(V(x,y)\) denote the element of \(\mathcal{E}_r\) whose \(S\)-coordinate is given by \(V_{S}(x,y) = S(x)-S(y)\) where \(x,y \in [r]\) and \(x \neq y\). Let \(\mathcal{E}'_r\) be the set of all vectors \(V(1,j)\) where \(j\in \{2,\ldots,r\}\). Clearly \(\mathcal{E}'_r\) is linearly independent; therefore the dimension of \(\spn(\mathcal{E}_r)\) is at least \(r-1\).
\end{proof}

\begin{remark}
	For any element \(V(x,y) \in \mathcal{E}_r\) where \(x \neq y, x \neq 1, y \neq 1\); \(V(x,y)=V(1,y)-V(1,x)\).
For any element \(V(x,1) \in \mathcal{E}_r\) where \(x \neq 1\); \(V(x,1) = -V(1,x)\). Therefore \(\mathcal{E}'_r\) spans \(\mathcal{E}\).
\end{remark}
Recall that, above, we gave two families of eigenvectors associated with eigenvalue of \(2^{r-3}\), \(\mathcal{F}_r\) and \(\mathcal{G}_r\).  Before addressing the entire eigenspace associated with \(2^{r-3}\), we address these two families separately.  Let \(G(x,y)\) denote the element of \(\mathcal{G}_r\) of the form \(G_S(x,y) = (r-2|S|)(S(x)-S(y))\) and \(F(w, x)(y, z)\) denote the element of \(\mathcal{F}_r\) of the form \(F(w,x)(y,z) = (S(w)-S(x))(S(y)-S(z))\) where \(\{w,x,y,z\} \subset [r]\).

	\begin{lemma} \label{size F'}
		There exists some \(\Gamma_r \subset \mathcal{F}_r\) such that \(\Gamma_r\) is linearly independent and \(|\Gamma_r| \geq \binom{r}{2}-r\).
	\end{lemma}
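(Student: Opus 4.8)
The plan is to exhibit an explicit linearly independent subfamily of $\mathcal{F}_r$ of the required size by fixing the two coordinates $w, x$ and letting $y, z$ range. Concretely, I would take $\Gamma_r = \{F(1,2)(i,j) : 3 \le i < j \le r\} \cup \{F(1,2)(1,k) : 3 \le k \le r\}$, or some similar choice anchored at the pair $(1,2)$. A quick count gives $\binom{r-2}{2} + (r-2) = \binom{r-1}{2}$ vectors, which exceeds $\binom{r}{2} - r = \binom{r-1}{2} - 1$, so in fact one has a little room to spare; it suffices to produce $\binom{r}{2}-r$ independent ones, and a cleaner choice may be $\Gamma_r = \{F(1,i)(1,j) : 2 \le i < j \le r\}$, whose size is exactly $\binom{r-1}{2} \ge \binom{r}{2}-r$ for $r \ge 2$.

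The key step is proving linear independence of the chosen family. I would do this by restriction to a well-chosen set of coordinates (i.e., subsets $S \subseteq [r]$) where the vectors have a triangular structure. For $F(1,i)(1,j)$ with $i<j$, note that $F(1,i)(1,j)_S = (S(1)-S(i))(S(1)-S(j))$, which equals $1$ when $S$ contains $1$ but neither $i$ nor $j$, equals $1$ when $S$ contains both $i$ and $j$ but not $1$, and is $0$ or negative otherwise. Evaluating at $S = \{1\}$ gives $1$ for every member of the family simultaneously, so that single coordinate is not enough; instead I would evaluate at coordinates $S = \{i, j\}$ for each pair $2 \le i < j \le r$. Then $F(1,i)(1,j)_{\{i,j\}} = (0-1)(0-1) = 1$, while $F(1,a)(1,b)_{\{i,j\}} = (0 - \{i,j\}(a))(0 - \{i,j\}(b))$, which is nonzero only if both $a$ and $b$ lie in $\{i,j\}$, i.e., only if $\{a,b\} = \{i,j\}$. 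Hence the matrix of these vectors restricted to the coordinates $\{S = \{i,j\}\}$ is exactly the identity, giving linear independence immediately.

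The main obstacle is not the independence argument itself — with the right choice of evaluation coordinates it collapses to reading off an identity matrix — but rather making sure the chosen subfamily genuinely lies inside $\mathcal{F}_r$ as defined (which requires $\{w,x,y,z\}$ to be a $4$-element subset of $[r]$) while still being large enough. The vectors $F(1,i)(1,j)$ use only the three indices $1, i, j$, so strictly they may not be of the form in Proposition~\ref{prop:(w-x)(y-z)} unless we allow repeats; if the definition of $\mathcal{F}_r$ forbids repeated indices, I would instead use $\Gamma_r = \{F(1,2)(i,j) : 3 \le i < j \le r\}$, which has $\binom{r-2}{2}$ elements — but $\binom{r-2}{2} < \binom{r}{2} - r$ for large $r$, so this is too small, and I would need to augment it. The clean fix is $\Gamma_r = \{F(1,2)(3,j) : 4 \le j \le r\} \cup \{F(1,i)(2,3) : i \ge 4,\ i \ne \text{fixed}\} \cup \cdots$; more systematically, fix $w=1$ and take all $F(1,2)(i,j)$ with $\{i,j\} \subseteq \{3,\dots,r\}$ together with all $F(1,i)(2,3)$ with $i \in \{4,\dots,r\}$, and check the combined family is independent by evaluating at coordinates $S=\{i,j\}$ and $S=\{2,3,i\} \setminus \cdots$ appropriately; one verifies the total count $\binom{r-2}{2} + (r-3)$ meets or exceeds $\binom{r}{2}-r$ and that a suitable triangular evaluation still works. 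I would present whichever of these bookkeeping choices makes the triangularity cleanest, and relegate the (routine) verification that the evaluation matrix is triangular with nonzero diagonal to a short displayed computation.
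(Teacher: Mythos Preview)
Your clean choice \(\Gamma_r=\{F(1,i)(1,j):2\le i<j\le r\}\) together with the evaluation at the two--element sets \(\{i,j\}\) is a genuinely elegant argument, and you are right that it gives an identity matrix on the nose. The problem, as you correctly flag, is membership in \(\mathcal{F}_r\): the paper's Proposition~\ref{prop:(w-x)(y-z)} is stated and proved under the hypothesis that \(w,x,y,z\) are four \emph{distinct} elements of \([r]\) (the case analysis there sets \(f=S(w)+S(x)+S(y)+S(z)\) and uses \(r-|S|+f-4\) inside a binomial, which only makes sense for four distinct indices). So the three--index vectors \(F(1,i)(1,j)\) are simply not available.

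Your fallback, however, has a real gap rather than merely unfinished bookkeeping. The family \(\{F(1,2)(i,j):3\le i<j\le r\}\) is \emph{not} linearly independent: since
\[
F(1,2)(i,j)+F(1,2)(j,k)=F(1,2)(i,k)
\]
for all distinct \(i,j,k\ge 3\), the entire family is spanned by \(\{F(1,2)(3,j):4\le j\le r\}\) and hence has dimension only \(r-3\), not \(\binom{r-2}{2}\). Likewise the second piece \(\{F(1,i)(2,3):4\le i\le r\}\) has dimension at most \(r-3\), so the union spans at most a \(2(r-3)\)--dimensional space, which is strictly smaller than \(\binom{r}{2}-r=\tfrac{r(r-3)}{2}\) for every \(r\ge 5\). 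No amount of triangularity bookkeeping can rescue this: the count of \(\binom{r-2}{2}+(r-3)\) vectors is correct, but they are linearly dependent. The issue is that fixing \emph{one} of the two factor pairs and varying only the other collapses the dimension drastically; to get \(\binom{r}{2}-r\) independent vectors you must vary both pairs.

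The paper handles this by an inductive construction: \(\Gamma_4=\{F(1,2)(3,4),F(1,3)(2,4)\}\), and at each stage one adjoins \(\Gamma'_r=\{F(1,2)(3,r)\}\cup\{F(1,j)(2,r):3\le j\le r-1\}\), a set of \(r-2\) vectors all of which involve the new index \(r\). Independence of \(\Gamma'_r\) is checked by evaluating at a carefully chosen set of \(r-2\) two--element coordinates, and independence from \(\Gamma_{r-1}\) follows because no earlier vector involves \(r\). The key point you are missing is that one must vary the \emph{first} pair \((1,j)\) as well as the second in order to reach the full dimension.
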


	\begin{proof}
	Let \(\gamma_S (w,x)(y,z)= (S(w)-S(x))(S(y)-S(z))\) with \(w,x,y,z \in [r]\). Let \(\Gamma_4 = \{\gamma_S(1,2)(3,4), \gamma_S(1,3)(2,4)\}\). We define \(\Gamma_r\) recursively by \(\Gamma_r = \Gamma_{r-1} \cup \Gamma'_{r}\), where
    \[
    \Gamma'_r = \left \{\gamma_S(1, j)(a, r) \, \Big | \, j \in \{2, \ldots, r-1\}, a = \min(\{2, \ldots, r-1\} \setminus \{j\}) \right \}.
    \] 
    Let \(A\) be a matrix whose first column is \(\gamma_S(1, 2)(3, r)\) and whose \(t\)-th column, for \(t \in \{2,r-2\}\), is \(\gamma_S(1, t)(2, r)\); clearly, the columns of \(A\) are \(\Gamma'_r\).  Let \(A'\) be the submatrix of \(A\) obtained by removing all except the rows \(\{1,3\}\) and \(\{2,k\}\) for \(k \in \{3,\ldots,r-1\}\).  Then \(A'\) is a diagonal matrix with all diagonal entries \(\pm 1\), so  \(\Gamma'_r\) is linearly independent. As \(\gamma_S (a,b)(c, d) \not\in \spn(\Gamma_{r-1})\) if \(r \in \{a,b,c,d\}\), \(\Gamma_r\) is, by induction, linearly independent.  Furthermore,
    \[
		|\Gamma_r| = |\Gamma_4|+\sum_{i=4}^{r-1}(i-1)=\sum_{i=0}^{r-1}(i-1)=\binom{r}{2}-r.
	\]
	\end{proof}

	\begin{lemma}\label{size G'}
		There exists some linearly independent \(\mathcal{G}'_r \subset \mathcal{G}_r\) such that \(|\mathcal{G}'| \geq r-1\).
	\end{lemma}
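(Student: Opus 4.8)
## Proof Proposal for Lemma \ref{size G'}

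The plan is to mimic the strategy already used for the family $\mathcal{E}_r$: exhibit an explicit linearly independent subfamily of $\mathcal{G}_r$ of size $r-1$, using a "star" construction. Recall that $G(x,y) \in \mathcal{G}_r$ has $S$-coordinate $G_S(x,y) = (r-2|S|)(S(x)-S(y))$. First I would observe the obvious additivity relations: $G(x,y) = G(1,y) - G(1,x)$ for any $x,y \neq 1$, and $G(x,1) = -G(1,x)$. Hence $\spn(\mathcal{G}_r) = \spn\{G(1,j) : j \in \{2,\ldots,r\}\}$, so it suffices to show the $r-1$ vectors $\mathcal{G}'_r := \{G(1,j) : 2 \le j \le r\}$ are linearly independent.

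For linear independence, the natural move is to evaluate these vectors on a cleverly chosen set of $r-1$ coordinates and check the resulting $(r-1)\times(r-1)$ matrix is nonsingular. A clean choice: evaluate $G(1,j)$ on the singleton coordinates $S = \{k\}$ for $k \in \{2,\ldots,r\}$. At $S = \{k\}$ we have $|S| = 1$, so $r - 2|S| = r-2 \neq 0$ (using $r \geq 4$), and $G_{\{k\}}(1,j) = (r-2)(\{k\}(1) - \{k\}(j)) = (r-2)(0 - [k=j]) = -(r-2)[k=j]$. Thus the $(r-1)\times(r-1)$ matrix with rows indexed by $j$ and columns by $k$ is $-(r-2)$ times the identity, which is nonsingular. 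Therefore $\mathcal{G}'_r$ is linearly independent and $|\mathcal{G}'_r| = r-1$, as claimed.

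I expect essentially no obstacle here — the only thing to be careful about is that the scalar $r - 2|S|$ does not vanish on the chosen coordinates, which is why singletons (giving $r-2 \neq 0$ for $r \geq 4$) are a safe choice rather than, say, coordinates $S$ with $|S| = r/2$. One could alternatively use the top coordinate $S = [r]$ together with singletons, but the singleton-only choice already diagonalizes the relevant submatrix, so no further work is needed. A brief remark recording the spanning relations (as was done after the proposition on $\mathcal{E}_r$) completes the picture, though it is not needed for the stated bound.
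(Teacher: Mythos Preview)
Your proof is correct and follows essentially the same approach as the paper: both take $\mathcal{G}'_r = \{G(1,j) : j \in \{2,\ldots,r\}\}$ and verify linear independence by exhibiting a nonsingular submatrix. Your version is in fact more explicit than the paper's, since you specify the singleton rows $S=\{k\}$, $k\in\{2,\ldots,r\}$, and obtain a diagonal matrix outright, whereas the paper only asserts an upper-triangular arrangement without naming the rows.
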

    
	\begin{proof}
		Let \(G(1, j) \in \mathcal{G}'_r\) be defined by \(G(1,j) = V^1 - V^j\) where \(j \in \{2, \ldots, r\}\). 
		Clearly, the elements of \(\mathcal{G}'_r\) can be arranged as columns of a matrix (with the addition of an elementary vector) such that the matrix is upper triangular and contains only non-zero entries on the principle axis.
	\end{proof}

    \begin{remark}
		It is straightforward to check that \(F(w,x)(y,z) = F(y,z)(w,x)\) and \(F(x,w)(y,z)=-F(x,w)(y,z)\); \(\mathcal{G}'_r\) is a basis of \(\spn(\mathcal{G}_r)\) via the following identities: \(G(x,y) = V_S(1,y) - V_S(1,x)\), \(G(x,1) = -G(1,x)\); and		\(\Gamma'_r\) is a spanning set for \(\Gamma_r\).
    \end{remark}
    
Now, we can combine these two results.

\begin{prop} \label{prop:size 2^(r-3)}
	The eigenspace of \(M_r\) associated with eigenvalue \(2^{r-3}\) has dimension at least \(\binom{r}{2}-1\).
\end{prop}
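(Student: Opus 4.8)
The plan is to combine the two sub-families $\Gamma_r \subset \mathcal{F}_r$ and $\mathcal{G}'_r \subset \mathcal{G}_r$ produced in Lemmas \ref{size F'} and \ref{size G'}, which have sizes $\binom{r}{2}-r$ and $r-1$ respectively, and show that the union $\Gamma_r \cup \mathcal{G}'_r$ is linearly independent. Since $\binom{r}{2}-r + (r-1) = \binom{r}{2}-1$, and every vector in $\mathcal{F}_r \cup \mathcal{G}_r$ is an eigenvector of $M_r$ with eigenvalue $2^{r-3}$, this immediately gives the claimed lower bound on the dimension of that eigenspace.

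First I would observe that $\spn(\Gamma_r) \subseteq \spn(\mathcal{F}_r)$ and $\spn(\mathcal{G}'_r) \subseteq \spn(\mathcal{G}_r)$, so it suffices to show $\spn(\mathcal{F}_r) \cap \spn(\mathcal{G}_r) = \{0\}$, after which independence of the union follows from independence of each piece (Lemmas \ref{size F'}, \ref{size G'}). To separate the two spans, I would exhibit a linear functional (equivalently, a coordinate pattern) that vanishes on one family but not the other, or argue via the structure of the vectors directly: every $V \in \spn(\mathcal{F}_r)$ has the form $V_T = q(T)$ for a quadratic form $q$ in the indicator variables $T(1),\ldots,T(r)$ with \emph{no} linear or constant term and, crucially, $q$ depends only on the ``balanced'' differences $T(i)-T(j)$ — in particular $V_\emptyset = V_{[r]} = 0$ and $V_T$ is symmetric under complementation $T \mapsto \bar T$. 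By contrast, a nonzero $V \in \spn(\mathcal{G}_r)$, being a combination of the $V^x - V^y$ with $V^x_T = (r-2|T|)T(x)$, is \emph{anti}-symmetric under complementation: replacing $T$ by $\bar T$ sends $|T| \mapsto r-|T|$ and $T(x) \mapsto 1 - T(x)$, and a short computation shows $V^x_{\bar T} + V^x_T = (r - 2|T|)$ summed appropriately collapses so that any element of $\spn(\mathcal{G}_r)$ satisfies $V_{\bar T} = -V_T + (\text{something that cancels in differences } V^x-V^y)$; one checks $(V^x-V^y)_{\bar T} = -(V^x - V^y)_T$. Hence a vector lying in both spans is simultaneously symmetric and antisymmetric under complementation, forcing it to vanish on complementary pairs, and combined with the explicit structure one concludes it is $0$.

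The main obstacle I anticipate is making the complementation-symmetry argument fully rigorous: I need to confirm that \emph{every} element of $\spn(\mathcal{F}_r)$ is complementation-symmetric (true, since each generator $(T(w)-T(x))(T(y)-T(z))$ is invariant under $T(i) \mapsto 1-T(i)$) and that every element of $\spn(\mathcal{G}_r)$ is complementation-antisymmetric (needs the small computation above), and then handle the overlap of the ``symmetric and antisymmetric'' subspace carefully — a vector that is both must satisfy $V_T = 0$ whenever $T = \bar T$ is impossible for $r$ odd but for $r$ even one must check the middle layer separately. An alternative, perhaps cleaner, route is purely dimension-counting with an explicit basis: take the $\binom{r}{2}-r$ vectors of $\Gamma_r$ together with the $r-1$ vectors $G(1,j)$, form the matrix whose columns are these $\binom{r}{2}-1$ vectors, and identify $\binom{r}{2}-1$ rows (indexed by carefully chosen subsets $S$) on which this matrix restricts to a block-triangular matrix with nonzero diagonal — the $\Gamma_r$ block handled as in Lemma \ref{size F'} using two-element and small sets, and the $\mathcal{G}'_r$ block detected on a set like $S$ of size far from $r/2$ so that $r - 2|S| \neq 0$ and the $V^x$ structure is visible; the cross blocks vanish because $\Gamma_r$-vectors are supported away from the rows witnessing $\mathcal{G}'_r$ or vice versa. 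I would carry out whichever of these two the referee prefers, but I expect the complementation-parity argument to be the shortest to write.
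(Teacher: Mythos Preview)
Your overall strategy---combine $\Gamma_r$ and $\mathcal{G}'_r$ and show the union is linearly independent---matches the paper's, but your preferred route for separating the two spans does not work. The claim that elements of $\spn(\mathcal{G}_r)$ are complementation-\emph{anti}symmetric is false: for $G(x,y)_T=(r-2|T|)(T(x)-T(y))$ one has
\[
G(x,y)_{\bar T}=\bigl(r-2(r-|T|)\bigr)\bigl((1-T(x))-(1-T(y))\bigr)=\bigl(-(r-2|T|)\bigr)\bigl(-(T(x)-T(y))\bigr)=G(x,y)_T,
\]
so $\mathcal{G}_r$ is complementation-\emph{symmetric}, exactly like $\mathcal{F}_r$. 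Both families lie in the same $(+1)$-eigenspace of the complementation involution, and the ``symmetric and antisymmetric implies zero'' argument gives no information. This is a genuine gap, not a detail to be patched: the parity argument simply does not separate the two spans.

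Your alternative block-triangular idea is the right one and is essentially what the paper does, but the sketch needs sharpening. The correct row choice for detecting $\mathcal{G}'_r$ is the set of \emph{singletons} $\{2\},\ldots,\{r\}$: on these rows every $F(w,x)(y,z)\in\mathcal{F}_r$ vanishes (with $w,x,y,z$ distinct, a singleton $T$ makes at least one factor $T(\cdot)-T(\cdot)$ equal to $0$), while $G(1,j)_{\{k\}}=(r-2)(0-[k=j])$ gives a diagonal nonsingular block. One then completes with a set of non-singleton rows on which $\Gamma_r$ is already known to be nonsingular (this exists by Lemma~\ref{size F'}), obtaining a block upper-triangular submatrix. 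Your phrase ``a set $S$ of size far from $r/2$'' is too vague to make the cross block vanish; the point is specifically that $\mathcal{F}_r$-vectors are zero on singletons.
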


\begin{proof}  Let \(\mathcal{F}'_r \subset \mathcal{F}_r\) and \(\mathcal{G}'_r \subset \mathcal{G}\) where \(\mathcal{F}'_r\) and \(\mathcal{G}'_r\) be the linearly independent subsets specified in Lemma \ref{size F'} and \ref{size G'}. Let \(A\) be a matrix whose columns are the elements of \(\mathcal{G}'_r\), \(B\) a matrix whose columns are the elements of \(\mathcal{F}'_r\); and \(C = [A\;B]\) the block matrix obtained by concatenating \(A\) and \(B\).  Let \(C' = [A'\; B']\) be the submatrix of \(C\) obtained by removing all but those rows \(\mathcal{R}_1\) indexed by singletons \(\{x\}\) with \(x \in \{2,\ldots,r\}\) and a set of rows \(\mathcal{R}_2\) indexed by non-singletons so that \(B\) restricted to those rows is nonsingular.  It is possible to choose such a family of non-singletons because each element of \(\mathcal{F}'_r\) has \(T\)-coordinate \(0\) if \(|T|=1\).  Then \(C'\) is block upper-triangular and nonsingular, since the \(\mathcal{R}_1\) portion of \(A'\) is diagonal and nonsingular, the \(\mathcal{R}_1\) portion of \(B'\) is zero, and the \(\mathcal{R}_2\) portion of \(B'\) is nonsingular. Therefore \(\mathcal{F}'_r \cup \mathcal{G}'_r\) is linearly independent. By Lemmas \ref{size F'} and \ref{size G'} the dimension of the eigenspace corresponding to eigenvalue \(2^{r-3}\) has dimension at least \(\binom{r}{2}-1\).
\end{proof}

\begin{proof}[Proof of Theorem \ref{thm:main-thm}]
The description of eigenspaces as above is complete, because the dimensions add up to \(2^r\):
\begin{center}
	\begin{tabular}{|c|c|c|}
		\hline
		Eigenvalue & Families & Dimension\\
		\hline\hline
		0 &\(\mathcal{A},\mathcal{D}\) &\(2^r-\binom{r+1}{2}\)\\
		\hline
		\(2^{r-4}(r-1)(r-2)\) & \(\mathcal{B}\)&1\\
		\hline
		\(2^{r-2}(r-1)\) & \(\mathcal{C}\) &1\\
		\hline
		\(2^{r-3}(r-2)\) & \(\mathcal{E}\) &\(r-1\)\\
		\hline
		\(2^{r-3}\) & \(\mathcal{F},\mathcal{G}\) &\(\binom{r}	{2}-1\)\\
		\hline
	\end{tabular}
\end{center}
\end{proof}

\section{Appendix}

Here we collect several standard identities for use in the above calculations.  If not immediately verifiable, see \cite{S12}.

\begin{equation} \label{eq1} \binom{k}{2}\binom{n}{k-x} = \binom{n}{2}\binom{n-2}{k-x-2}+x(k-x)\binom{n}{k-x}+\binom{x}{2}\binom{n}{k-x} \end{equation}

\begin{equation} \label{eq2} \sum_k k\binom{n}{k}=n2^{n-1} \end{equation}

Chu-Vandermonde identity:

\begin{equation} \label{eq3} \sum_{j=0}^k \binom{m}{j}\binom{n-m}{k-j}=\binom{n}{k} \end{equation}

\begin{equation} \label{eq4} \sum_{k=0}^n k^2\binom{n}{k}=(n+n^2)2^{n-2} \end{equation}

\begin{equation} \label{eq5} \sum_k \binom{k}{2}\binom{n}{k-x}=\binom{n}{2}2^{n-2}+xn2^{n-1}+\binom{x}{2}2^{n} \end{equation}

\section{Acknowledgements}

Thank you to Gerhard Woeginger for suggesting the proof of Proposition \ref{prop:setsystem}.


\begin{thebibliography}{1}


\bibitem{AGL12} D.~Apon,  W.~Gasarch,  K.~Lawler. {\em An NP-Complete Problem in Grid Coloring},  2012. \texttt{https://arxiv.org/abs/1205.3813}

\bibitem{CFP11} J.~Cooper, Joshua, S.~Fenner, S.~Purewal. {\em Monochromatic boxes in colored grids.} SIAM J.~Discrete Math.~{\bf 25} (2011), no.~3, 1054--1068.

\bibitem{FGGP12} S.~Fenner,  W.~Gasarch,  C.~Glover, S.~Purewal. {\em Rectangle free colorings of grids},  2012. \texttt{http://arxiv.org/abs/1005.3750}

\bibitem{S12} R.~P.~Stanley, {\em Enumerative combinatorics}. Volume 1. Cambridge Studies in Advanced Mathematics, {\bf 49}. Cambridge University Press, Cambridge, 2012. 

\end{thebibliography}
\end{document}